\theoremstyle{plain}
\newtheorem{Thm}{Theorem}[section]
\newtheorem{Lem}[Thm]{Lemma}
\newtheorem{Ass}[Thm]{Assumption}
\theoremstyle{definition}
\newtheorem{Rem}[Thm]{Remark}
\newcommand{\dimX}{l}
\newcommand{\dimY}{m}
\newcommand{\dimB}{d}
\newcommand{\dimZ}{ {\dimY \times \dimB} }
\title{
Fully coupled drift-less Forward and backward Stochastic Differential Equations in a degenerate case
}
\author{Takahiro Tsuchiya}
\begin{document}
\maketitle

\begin{abstract}
Existence and uniqueness results of fully coupled forward stochastic differential equations without drifts 
and backward stochastic differential equations 
in a degenerate case are obtained for an arbitrarily large time duration. 
\end{abstract}
 
\section{Introduction}

Let $(\Omega, \mathscr { F }, \mathbb{P} )$ be a probability space, and let $\left\{\mathrm{W} (t)\right\}_{t \geq 0}$ be a $d$-dimensional Wiener process in this space. We denote the natural filtration of this Wiener process by $\mathscr { F }_{t} .$ In this paper, we consider the following fully coupled forward-backward stochastic differential equation (FBSDE); for arbitrary fixed $(t, x) \in [0,T] \times \mathbb{R}^{\dimX}$, 
\begin{equation}\label{drifted FBSDEs}
	\begin{dcases}
		X(r) &= x 
		+ \int_{t}^{r} b \left( s, X(s), Y(s), Z(s) \right) \mathrm{d}s
		+ \int_{t}^{r} \sigma \left( s, X(s), Y(s), Z(s) \right) \mathrm{dW}(s), 
		\\ Y(r) &= \varphi \left( X (T)\right) + \int_{r}^{T} f \left(  s, X(s), Y(s), Z(s) \right) \mathrm{d}s -  \int_{r}^{T} Z (s) \mathrm{dW}(s), \quad r \in [t, T]. 
	\end{dcases}
\end{equation}
where $(X, Y, Z)$ takes values in $\mathbb{R}^{\dimX} \times \mathbb{R}^{\dimY} \times \mathbb{R}^{\dimZ}$, and $b$, and $\sigma$, and $f$ are mappings with appropriate dimensions which are, for each fixed $(s, X(s), Y(s), Z(s)), \mathscr{F}_{s}$-progressively measurable for $t \leq s \leq T$. We assume that they are Lipschitz with respect to the spatial variable $(x, y, z)$ ; $T>0$ is an arbitrarily prescribed number and the time interval is called the time duration. We look for a triple of $\mathscr{F}_{r}$-adapted processes $\left(X(r), Y(r), Z(r)\right)$ satisfying this equation \eqref{drifted FBSDEs}. 

It is known that for a sufficient small duration, the local existence and uniqueness holds using a contraction map \cite{antonelli1993} and \cite{Delarue2002209} under a Lipschitz condition. 
Indeed, it was shown by Antonelli in \cite{antonelli1993} that the Lipschitz condition is not enough for the existence and uniqueness of FBSDE in an arbitrarily large time duration. 

Therefore, it has made some significant progress in the fully coupled FBSDE in the difference view points: 
a kind of four-steps scheme approach \cite{Ma1994}, \cite{Pardoux1999}, 
the method of continuation and a monotonicity condition, \cite{Hu1995}, \cite{Yong1997} and \cite{Peng1999}.  
Recently, a different approach have been proposed in \cite{Ma2015} to study the decoupling field $u(t, x)$. 
They also pointed out one of the most important property that as the well-poshness of FBSDEs is essentially given by a so called {\it Characteristic BSDE} whose generator has at least quadratic growth in both $Y$ and $Z$. 

In this paper, we present a probabilistic and an analytic method to treat the fully coupled forward drift-less and backward SDE with a degenerate case formally introduced as \eqref{ass: degeneration} in Assumption \ref{ass: all condition}; for arbitrary fixed $(t, x) \in [0,T] \times \mathbb{R}^{\dimX}$
\begin{equation}\label{drifted-less FBSDEs}
	\begin{dcases}
		X(r) &= x 
		+ \int_{t}^{r} \sigma \left( s, X(s), Y(s), Z(s) \right) \mathrm{dW}(s), 
		\\ Y(r) &= \varphi \left( X (T)\right) + \int_{r}^{T} f \left(  s, X(s), Y(s), Z(s) \right) \mathrm{d}s -  \int_{r}^{T} Z (s) \mathrm{dW}(s), \quad r \in [t, T]. 
	\end{dcases}
\end{equation}
It provides that there exists a pair of continuous function $(u, v)$ such that 
it is $(1/2)$-H\"older continuous with respect to the spatial variable and 
it is consistent with so called Backward Stochastic Riccati Equations.
Furthermore, 
it satisfies 
\begin{equation*}
	\sigma (s, X(s), Y(s), Z(s)) = \sigma (s, X(s), u(s, X(s)), v(s, X(s))), \quad t \leq s \leq T.
\end{equation*}
Then, the forward SDE become a unique strong solution 
with the $(1/2)$-H\"older continuous diffusion coefficient which is critical condition in the sense of Yamada-Watanabe condition in \cite{YW} for one dimension. 

The advantages of our method are as follows: 
 (i) the assumptions are very simple and natural to verify and we can treat the case without a monotonicity condition. 
 (ii) Many existing problems of FBSDE in mathematical finance and machine learning (\cite{Ho2020NEURIPS2020_4c5bcfec, song2021scorebased}) satisfy these assumptions. Furthermore, as a degenerate case is strongly related to {\it vanishing gradient problem}, the theoretical development have been expected. 
 (iii) We do not need to impose the non degenerate condition on the diffusion term. 
 This allows us to treat FBSDEs which the previous studies does not cover.


The paper is organized as follows. 
For preliminaries, we introduce necessary notations and assumptions in section \ref{sec:preliminaries}. 
In section \ref{sec:Continuity of decoupling fields}, 
we define an approximation process and introduce a pair. 
We show that the pair is uniformly equicontinuous and satisfies linear growth condition.  
Therefore, we can find a sub-sequence which uniformly converges. 
In section \ref{sec:Global solutions}, 
the uniform convergent property implies a stability problem of the forward SDEs. 
Then, we obtain the global solution as the convergence point. 
Independently, we set a section \ref{sec:Martingale estimation}. 
In this section, a necessary important estimation is shown. 
The transition function of SDE with uniformly bounded coefficients, has a uniformly equicontinuous for the time and space, which is obtained an analytical approach known as Calder\'on Zygmund lemma. 
In Lemma \ref{lem: a key lemma: Lipschitz version}, we show a simple but convent inequality of the structure without the uniformly bounded assumption.


\section{Preliminaries}\label{sec:preliminaries}
\renewcommand{\theenumi}{A.\arabic{enumi}}
\renewcommand{\labelenumi}{(\theenumi)}

Let $\mathrm{W}$ be a standard Wiener process with values in $\mathbb{R}^{\dimB}$ defined on some complete probability space $(\Omega, \mathscr { F }, \mathbb{P} )$. $\{ \mathscr { F }_t \}_{t \geq 0}$ is an argument of natural filtration of $\mathrm{W}$ which satisfies usual condition. $\mathbb { R } ^ { \dimZ }$ is identified with the space of real matrices with $\dimY$ rows and $\dimB$ columns. If $z \in \mathbb{R}^{\dimZ}$, we have $| z | ^ { 2 } = \operatorname { trace } \left( z z ^ { * } \right)$ where $| \cdot| $ stands for the Frobenius norm. 


For any real $\dimX \in \mathbb{N}$ and $T>0$, $ \mathscr{ S }^2 \left( \mathbb { R } ^ { \dimX } \right)$, denotes the set of $\mathbb{R}^{\dimX}$-valued, adapted and  cadlag process $\left\{ X (t) \right\} _ { t \in [ 0 , T ] }$ such that 
	$\| X \| = \left\{ \mathbb { E } \left[ \sup_{ 0 \leq t \leq T} \left| X (t) \right| ^ { 2 } \right]\right\} ^ { \frac{1}{2} } < + \infty$. 
A collection $\mathscr{H}  ^ { 2 } \left( \mathbb { R } ^ { \dimZ } \right)$ denotes the set of (equivalent classes of) predictable processes $\left\{ Z (t) \right\} _ { t \in [ 0 , T ] }$ with values in $\mathbb{R}^{\dimZ}$ such that 
	$\| Z \|  = \left\{ \mathbb { E } \left[ \left( \int _ { 0 } ^ { T } \left| Z _ { r } \right| ^ { 2 } \mathrm { d } r \right) ^ { } \right] \right\}^ { \frac{1}{2} } < + \infty$. 
We write a Banach space $	\mathscr { S } ^ { 2 } (\mathbb{R}^{\dimX}) \times \mathscr { S } ^ { 2 } (\mathbb{R}^{\dimY}) \times \mathscr{H}  ^ { 2 } \left( \mathbb { R } ^ { \dimZ } \right) \equiv \mathscr{S}^2 \times \mathscr{S}^2 \times \mathscr{H}^2$ if there is no risk to confuse. For $(X, Y, Z) \in \mathscr{S}^2 \times \mathscr{S}^2 \times \mathscr{H}^2$, we note that 
	\begin{equation*}
	\begin{split}
	(X, Y, Z, \mathrm{W}): [0, T] \times \Omega \longrightarrow  \mathbb{R}^{\dimX} \times \mathbb{R}^{\dimY} \times \mathbb{R}^{\dimZ} \times \mathbb{R}^{\dimB}.
	\end{split}
	\end{equation*}
	For $\forall t \in [0,T]$, $\forall (x, y, z) \in \mathbb{R}^{\dimX} \times \mathbb{R}^{\dimY}\times \mathbb{R}^{\dimZ} $, we consider measurable functions $(x, y, z) \mapsto \sigma (t, x, y, z)$, $(x, y, z) \mapsto f (t, x, y, z)$ and $x \mapsto \varphi (x) $. For any given initial distribution $\mu$, we suppose that it holds that \label{ass: initial}
		\begin{equation*}
		\int_{\mathbb{R}^{\dimX}}  \int_{0}^{T} 
		|\sigma (s, x, \varphi (x) , 0 )|^2 +|\varphi (x)|^2 + 
		| f (s, x, \varphi (x), 0 )|^2 \mathrm{d}s \mu (\mathrm{d}x )	< \infty. 
		\end{equation*}

	\begin{Ass}\label{ass: all condition}
	We say that Assumption \ref{ass: all condition} holds if  
	\begin{enumerate}
		\item The functions $(x, y, z) \mapsto \sigma (t, x, y, z)$, $(x, y, z) \mapsto f (t, x, y, z)$ and $x \mapsto \varphi (x) $ are infinitely differentiable with uniformly bounded derivatives. \label{ass: smooth}
		\item There exists a constant $\Lambda$ such that $\forall t \in [0,T]$, $\forall (x, y, z) \in \mathbb{R}^{\dimX} \times \mathbb{R}^{\dimY}\times \mathbb{R}^{\dimZ} $, \label{ass: holder}
		\begin{equation*}
			\begin{split}
			|\varphi (x+h)- \varphi (x)| &\leq \Lambda |h|^{1/2}, 
			\\	|f (t, x+h, y, z)-f (t, x, y, z)| &\leq \Lambda |h|^{1/2}, \quad h \in \mathbb{R}^{\dimX}, \ 
				|h|>1.
			\end{split}
		\end{equation*} 
		\item The following set $\mathcal{Z}_{0}$ is not empty set: \label{ass: degeneration}
		\begin{equation*}
		\begin{split}
			\mathcal{Z}_0 :=\left\{ x_0 \in \mathbb{R}^{\dimX}: 
			\sigma \left(s,  x_0, \varphi (x_0), 0 \right) = f \left(s, x_0, \varphi (x_0), 0 \right) =0, \ s \in [0,T]
			\right\} \neq \emptyset. 
		\end{split}	
		\end{equation*}
	\end{enumerate}
	\end{Ass}
	
	For the functions $\sigma $ and $\varphi$,  we denote
	  	\begin{equation*}
	\begin{split}
		&L_{\varphi, x} \triangleq \inf \left\{ L >0 : 
		\forall x_i \in \mathbb{R}^{\dimX}\  (i=1, 2), \quad | \varphi ( x_1 ) - \varphi ( x_2 ) | 
		\leq L |x_1 -x_2| 
		\right\}, 
		\\
		&	L_{\sigma, z} \triangleq \inf \left\{ L >0 : 
		\forall (t, x, y, z_i) \in [0,T] \in \mathbb{R}^{\dimX} \times \mathbb{R}^{\dimY}\times \mathbb{R}^{\dimZ} \  (i=1,2), \right.
		\\ & \left. \quad \quad \quad \quad \quad \quad \quad \quad \quad\quad \quad \quad  | \sigma (t, x, y, z_1) -  \sigma (t, x, y, z_2) | 
		\leq L |z_1 -z_2| 
		\right\},
	\end{split}
	\end{equation*}	
	and $L_{\sigma, x}$, $L_{\sigma, y}$ are defined by the same manner. 
	
	\subsection{Decoupled FBSDEs flow: $(X^{t, x}, Y^{t, x}, Z^{t, x})$}
	Under \eqref{ass: smooth}, 
	for each $t \in[0, T), x \in \mathbb{R}^{\dimX}$, we denote by $X^{t, x} = \left\{X^{t, x}(r), t \leq r \leq T\right\}$ the unique strong solution of the following SDE:
\begin{equation*}
\left\{\begin{aligned}
	\mathrm{d} X^{t, x}(u) &=
	\sigma\left(s, X^{t, x}(s)\right) \mathrm{d} \mathrm{W}(s), \quad t \leq s \leq T \\
	X^{t, x}(t) &=x,
\end{aligned}\right.	
\end{equation*}
or, it is equivalent to 
\begin{equation*}
	X^{t, x}(r) =x 
	+ \int_{t}^{r} \sigma\left(s, X^{t, x}(s)\right) \mathrm{d} \mathrm{W}(s), \quad t \leq r \leq T .
\end{equation*}
Moreover, the random field $\left\{X^{t, x}(r) ; \ 0 \leq t \leq r \leq T,\  x \in \mathbb{R}^{\dimX}\right\}$ has a version which is a.s.~jointly continuous in $(t, s, x)$, together which its $x$ infinity partial derivatives, see e.g.~\cite{IkedaWatanabe}. 

For any $t \in[0, T)$ and $x \in \mathbb{R}^{d}$, let $(Y^{t, x}, Z^{t, x}) =  \left\{\left(Y^{t, x} (r) , Z^{t, x} (r)\right) ; t \leq r \leq T\right\}$ denote the unique element of $\mathscr{S}^2 \times \mathscr{H}^2$ which solves the following BSDE : for all $ t \leq r \leq T$, 
\begin{equation*}
Y^{t, x} (r) =\varphi \left(X_{T}^{t, x}\right)+\int_{r}^{T} f\left(s, X^{t, x}(s), Y^{t, x}(s), Z^{t, x}(s)\right) \mathrm{d} s-\int_{r}^{T} Z^{t, x}(s) \mathrm{d}\mathrm{W}(s) . 
\end{equation*}
In short, $(X^{t, x}, Y^{t, x}, Z^{t, x})$ is the unique solution to the decoupled FBSDE with the initial condition $X^{t, x}(t) =x $.

	\begin{Rem}
	The condition \eqref{ass: holder} does not include the previous result in \cite{Tsuchiya2021v2} 
	in the case of one dimension. 
	\end{Rem}

\section{Continuity of decoupling fields}\label{sec:Continuity of decoupling fields}
Let us explain briefly our idea. According to the proof of Lemma 2.5.15 in \cite{FrommPhD}, if a strongly regular and deterministic decoupling field exists, it satisfies 
	\begin{equation*}
		\left| u(t+h, x) - u(t, x)  \right| \leq 
		C \left( 1+  \mathbb{E} \left[ \sup_{ t \leq s \leq t+h }\left|  X^{t, x} (s)\right| \right]\right) |h|, \quad t, \ t+h \in [t,T]. 
	\end{equation*}
Moreover, there exists an example such that for $T>1$, 
	\begin{equation*}
		\lim_{ t \downarrow T-1 } \mathbb{E} \left[ \sup_{ t \leq s \leq t+h }\left|  X^{t, x} (s)\right| \right]  
		=\lim_{ t \downarrow T-1 } L_{u (t, \cdot), x}
		= \infty, 
	\end{equation*} 
	and we will see at this example later in Remark \ref{lem:Existence for ODEs and SDEs}. 
From the observation, we may say that the backward solution and the decoupling fields may be bounded by the forward process. Therefore, it leads to construct a forward process based approximation scheme as follows.

\subsection{Construction of the scheme via $(u_n, v_n)$}
Under the condition that all coefficients are regular enough, 
we consider a solution of SDEs and then we introduce the corresponding solution of BSDEs. 
Thus, we obtain the solution to the decoupling FBSDEs,. 
Thanks to \cite{PardouxPeng1992}, it leads a classical smooth solution  to a corresponding semi-linear PDEs. 
We note that the proof is also applied to the situation $\dimY \geq 1$. 

More preciously, we define as follows: Suppose \eqref{ass: smooth} holds. 
We shall define for $(t, x)\in [0,T] \times \mathbb{R}^{\dimX}$, 
\begin{equation*}
	\left( X^{t, x}_{0} (r), Y^{t, x}_0 (r) , Z^{t, x}_{0}(r) \right) =(x, \varphi (x), 0), \quad r \in [t, T],
\end{equation*}
and 
\begin{equation*}
	u_{0}(t, x)= \varphi ( x ) , \quad v_{0}(t, x)=0, \quad (t, x) \in [0,T] \times \mathbb{R}^{\dimX}. 
\end{equation*}
Then, we define the following approximation series, $\Theta_{n-1}^{t, x} := \left( X_{n-1}^{t, x}, Y_{n-1}^{t, x}, Z_{n-1}^{t, x}\right)$ 
for $(t, x, n) \in [0,T]\times \mathbb{R}^{\dimX}\times \mathbb{N}$ such that it satisfies 
\begin{equation}\label{decoupling FBSDE approximation via u and v}
\begin{split}&
	\begin{dcases}
	X^{t, x}_{n} (r) &= x + \int_{t}^{r} \sigma \left( s, X^{t, x}_{n}(s), u_{n-1} (r, X^{t, x}_{n} (s) ), v_{n-1} (r, X^{t, x}_{n} (s) ) \right) \mathrm{dW}(s)
	\\ Y^{t, x}_{n-1} (r) &= \varphi \left( X^{t, x}_{n-1} (T) \right) + \int_{r}^{T} f \left( s, X^{t, x}_{n-1}(s), Y^{t, x}_{n-1}(s), Z^{t, x}_{n-1}(s) \right) \mathrm{d}s
	\\&\quad -  \int_{r}^{T}  Z^{t, x}_{n-1}(s) \mathrm{dW}(s), \quad (t, x) \in [0,T] \times \mathbb{R}^{\dimX}, 
	\end{dcases}
\end{split}
\end{equation}
where, $u_{n-1}$ is a solution to a semi-linear parabolic PDE solution and it holds that 
\begin{equation*}
	Y^{t, x}_{n-1}(r) = u_{n-1} (r, X^{t, x}_{n-1} (r) ), \quad  Z^{t, x}_{n-1} (r)= v_{n-1} (r, X^{t, x}_{n-1} (r) ), \quad (n, r) \in \mathbb{N} \times [t, T], 
\end{equation*}
where $v_{n-1} (t, x) = \nabla_x u_{n-1}(t, x) \cdot \sigma (t, x, u_{n-1}(t, x), v_{n-1}(t, x))$ for all $(t, x) \in [0,T]\times \mathbb{R}^{\dimX}$. 
For the detail, see Lemma \ref{lem: an expression via FK and BSDE uniqueness}. 
It is remarkable that generally we have 
\begin{equation*}
	Y^{t, x}_{n} (r) =u_{n}(r, X^{t, x}_{n}(r)) \neq u_{n}(r, X^{t, x}_{n+1}(r)), \quad 
	Z^{t, x}_{n} (r) =v_{n}(r, X^{t, x}_{n}(r))  \neq v_{n}(r, X^{t, x}_{n+1}(r)).
\end{equation*}



\subsection{A priori estimation and Degenerate condition}
Firstly, we recall a priori estimate of BSDEs that the $\mathscr{H}^2$-norm of the solution can be bounded by the final point and the random driver; 
\begin{equation*}
	\begin{split}
\exists C>0; \quad 
	&\mathbb{E}\left[ \int_{0}^{T} | Y_1 (s) - Y_2 (s) |^2 + | Z_1 (s) - Z_2 (s) |^2 \mathrm{d}s \right] 
	\\&\leq C
	\mathbb{E} \bigg[ \varphi \left( X_1 (T) \right)-\varphi \left( X_2 (T) \right) | ^2 \biggr. 
	\\& \left.
	+ \int_{0}^{T} |f(s, X_1 (s), Y_2 (s), Z_2 (s))-f(s, X_2 (s), Y_2 (s), Z_2 (s))|^2 \mathrm{d}s \right]. 
	\end{split}
\end{equation*} 
This says that the squared norm of the difference of $(Y, Z)$ can be estimated by that of the forward process. 
Moreover, considering the super-linear growth of the characteristic BSDEs, we consider \eqref{ass: holder}. 
This allows us to obtain a constant $C>0$ such that 
\begin{equation*}
\begin{split}
	&\mathbb{E} \left[ |\varphi \left( X_1 (T) \right)-\varphi \left( X_2 (T) \right) | ^2 \right]
	\leq C \sup_{s \in [0,T]}\mathbb{E}\left[  \left| X_1(s) -X_2 (s)\right| \right],	
	\\ 
	&\mathbb{E} \left[ |f(s, X_1 (s), Y_2 (s), Z_2 (s))-f(s, X_2 (s), Y_2 (s), Z_2 (s))|^2 \right]
	\\& \quad \leq C \sup_{s \in [0,T]}\mathbb{E}\left[  \left| X_1(s) -X_2 (s)\right| \right].	
\end{split}
\end{equation*}
Again applying the exponential inequality, Lemma \ref{lem: a key lemma: Lipschitz version}, we can estimate the above $L^1$-norm. Furthermore, we note that, as one may consider $L^p$-norm estimate, to best of our knowledge, it seems to be hard to estimate the backward as above except for $p=1$. 

\begin{Lem}[Equicontinuous $u_n$ with respect to spatial variables]\label{lem:equicontinuous un}
Under \eqref{ass: smooth} and \eqref{ass: holder},  
there exists a constant $C>0$ such that for all $n \in \mathbb{N} \cup \{ 0\}$ satisfies 
\begin{equation*}
	\sup_{n  \in \mathbb{N}}\sup_{x \in \mathbb{R}^{\dimX}}\sup_{0 \leq t \leq T}| u_{n}(t, x+h)-u_{n}(t, x) |  \leq  C |h|^{1/2}.
\end{equation*}
\end{Lem}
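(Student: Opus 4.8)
The plan is to exploit the probabilistic representation $u_n(t,x)=Y^{t,x}_n(t)$ built into \eqref{decoupling FBSDE approximation via u and v}, so that a spatial increment of $u_n$ is nothing but the initial value of the difference of two backward solutions, and then to dominate that increment by the forward flow. First I would record a \emph{global} $(1/2)$-H\"older bound for the data. Assumption \eqref{ass: holder} supplies the $(1/2)$-H\"older estimate for $\varphi$ and $f(s,\cdot,y,z)$ only in the regime $|h|>1$, while \eqref{ass: smooth} makes these functions globally Lipschitz; since $|h|\le|h|^{1/2}$ whenever $|h|\le 1$, the two regimes splice into a single bound $|\varphi(x+h)-\varphi(x)|+|f(s,x+h,y,z)-f(s,x,y,z)|\le C|h|^{1/2}$ valid for all $h\in\mathbb{R}^{\dimX}$ and uniformly in $(y,z)$. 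Squaring converts the half-power into a first power, which is exactly the $L^1$-type right-hand side anticipated in the a priori discussion preceding the statement.

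Next, writing $\delta X(s)=X^{t,x+h}_n(s)-X^{t,x}_n(s)$ and likewise $\delta Y,\delta Z$, I would apply the backward a priori estimate recalled above in its supremum form (It\^o's formula on $|\delta Y|^2$, Young's inequality to absorb $\delta Z$, and Gronwall, with the generator frozen at the $x$-solution so that only the forward argument varies between the two BSDEs). Using the global $(1/2)$-H\"older bound for $\varphi$ and $f$, this produces a constant $C$ depending only on $T$, $\Lambda$ and the Lipschitz constants of $f$ in $(y,z)$ with
\[
|u_n(t,x+h)-u_n(t,x)|^2=|\delta Y(t)|^2\le\sup_{t\le s\le T}\mathbb{E}\big[|\delta Y(s)|^2\big]\le C\sup_{t\le s\le T}\mathbb{E}\big[|\delta X(s)|\big].
\]
The decisive structural point is that this $C$ carries no dependence on $n$: the only $n$-dependent objects, $u_{n-1}$ and $v_{n-1}$, have been pushed entirely into the forward process by the scheme \eqref{decoupling FBSDE approximation via u and v}.

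It remains to bound the forward $L^1$-increment uniformly in $n$ via Lemma \ref{lem: a key lemma: Lipschitz version}. For each fixed $n$ the effective diffusion $a\mapsto\sigma(s,a,u_{n-1}(s,a),v_{n-1}(s,a))$ is Lipschitz, since by \eqref{ass: smooth} and classical parabolic theory $u_{n-1},v_{n-1}$ are smooth, so $\delta X$ is the difference of two strong solutions of an SDE driven by the same $\mathrm{W}$ with Lipschitz diffusion. Applying It\^o's formula to the Yamada--Watanabe approximants $\phi_m\uparrow|\cdot|$ and using $\phi_m''(\xi)|\xi|^2\le 2|\xi|/m$, the diffusion contribution is controlled by $\tfrac1m(\text{Lipschitz constant})^2\int_t^{\cdot}\mathbb{E}|\delta X|\,\mathrm{d}s$, which vanishes as $m\to\infty$ once one knows $\mathbb{E}|\delta X|$ is finite; what survives is $\sup_{t\le s\le T}\mathbb{E}|\delta X(s)|\le|h|$ with an absolute constant. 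This is the phenomenon already visible in $\mathrm{d}X=X\,\mathrm{dW}$, where $\mathbb{E}|\delta X(s)|=|h|$ exactly while the $L^2$-norm swells like $|h|\,e^{s/2}$. Substituting into the previous display yields $|u_n(t,x+h)-u_n(t,x)|\le C|h|^{1/2}$ with $C$ independent of $n,x,t$ (the case $n=0$ being immediate since $u_0=\varphi$), and taking suprema proves the claim.

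The main obstacle is precisely this uniformity. The Lipschitz constant of $\sigma(s,\cdot,u_{n-1},v_{n-1})$, equivalently $L_{u_{n-1},x}$, is genuinely unbounded in $n$ --- indeed the example flagged in Remark \ref{lem:Existence for ODEs and SDEs} shows $L_{u(t,\cdot),x}\to\infty$ as $t\downarrow T-1$ --- so any estimate routed through the $L^2$-norm, or through a Gronwall argument in $L^2$, would degenerate. The argument survives only because at the critical Yamada--Watanabe exponent $1/2$ the $L^1$-increment of the flow is insensitive to the size of the Lipschitz constant, and the squaring in the backward a priori estimate then converts this constant-free $L^1$-bound into the required $(1/2)$-H\"older modulus. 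In particular no induction on a growing H\"older constant is needed: the estimate holds for every $n$ in one pass.
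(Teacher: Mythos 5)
Your proposal is correct, and its overall architecture is exactly the paper's: represent the spatial increment of $u_n$ through the difference of the two backward solutions, splice \eqref{ass: smooth} (Lipschitz, used for $|h|\le 1$ via $|h|\le|h|^{1/2}$) with \eqref{ass: holder} (used for $|h|>1$) into a global $(1/2)$-H\"older bound whose square is linear in the forward increment, apply the exponentially weighted It\^o/Young a priori estimate (this is precisely Lemma \ref{lem: Y dominated by X} in the paper) with the generator frozen at the $x$-solution so that only the forward argument differs, and close with the constant-free $L^1$ flow estimate of Lemma \ref{lem: a key lemma: Lipschitz version}; you also correctly identify that uniformity in $n$ comes for free because $u_{n-1},v_{n-1}$ enter only through the forward coefficient. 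The one genuine divergence is your in-line justification of that key $L^1$ estimate: the paper proves Lemma \ref{lem: a key lemma: Lipschitz version} by identifying the flow difference, via pathwise uniqueness of an auxiliary linear system, with a positive stochastic exponential $\mathcal{E}(X,j)$ of unit expectation, giving $\mathbb{E}|X^{t,x+\delta e_{(j)}}(r)-X^{t,x}(r)|=|\delta|$ exactly, whereas you rederive it with Yamada--Watanabe approximants $\phi_m$, letting the $m^{-1}K_n^2$ term from $\phi_m''(\xi)\xi^2\le 2|\xi|/m$ vanish after the expectation. Your route buys a transparent explanation of \emph{why} the Lipschitz constant drops out at the critical exponent $1/2$, but as written it is intrinsically scalar: for $\dimX>1$ the quadratic variation of a single component $\delta X_i$ is controlled only by $K_n^2|\delta X|^2$, not by $K_n^2|\delta X_i|^2$, so $\phi_m''(\delta X_i)$ does not tame it and the componentwise argument breaks; since the lemma is stated for general $\dimX$ and you also cite it as a black box, your proof stands, but the YW sketch should be flagged as a one-dimensional heuristic rather than a substitute proof of Lemma \ref{lem: a key lemma: Lipschitz version}.
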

\begin{proof}
Let $n \in \mathbb{N} \cap \{ 0\}$. 
By the definition, we have $Y_n^{t, x} (r) = u_{n}(r, X_{n}^{t, x}(r))$ and it holds for any $(t, x) \in [0,T] \times \mathbb{R}^{\dimX}$, 
\begin{equation*}
\begin{split}
	 Y_n^{t, x} (r) &= Y_n^{t, x} (T) + \int_{r}^{T} f \left(s, \Theta_n^{t, x} (s) \right)\mathrm{d}s- \int_{r}^{T} Z_n^{t, x} (s) \mathrm{dW}(s), \quad t \leq r \leq T. 	
\end{split}
\end{equation*}
It leads to 
\begin{equation*}
\begin{split}
	\left\{ Y_n^{t, x} (r) + \int_{t}^{r} f \left(s, \Theta_n^{t, x} (s) \right)\mathrm{d}s\right\}_{t \leq r \leq T}
\end{split}
\end{equation*}
is a martingale. 
Thus, we obtain 
\begin{equation*}
	u_{n}(t, x) = \mathbb{E} \left[\varphi (X_{n}^{t, x}(T)) + \int_{t}^{T} f(s, \Theta_n^{t, x} (s)) \mathrm{d}s \right], \quad (n, t, x) \in \mathbb{N}\times[0,T]\times \mathbb{R}^{\dimX}. 
\end{equation*}
Notify that for all $ (n, t, x) \in \mathbb{N}\times[0,T]\times \mathbb{R}^{\dimX}$, it holds that 
\begin{equation*}
	\begin{split}
	&u_{n}(t, x+h)- u_{n}(t, x)
	\\&= \mathbb{E} \left[\varphi (X_{n}^{t, x+h}(T))- \varphi (X_{n}^{t, x}(T))  
	+ \int_{t}^{T} f(s, \Theta_n^{t, x+h} (s)) - f(s, \Theta_n^{t, x} (s)) \mathrm{d}s \right]	
	\end{split}
\end{equation*}
Let us consider the driver term. It follows from the Lipschitz continuous \eqref{ass: smooth} that 
there exists a constant $C>0$ such that 
\begin{equation*}
	\begin{split}
	&\int_{t}^{T} \mathbb{E} [ \left| f(s, \Theta_n^{t, x+h} (s)) - f(s, \Theta_n^{t, x} (s)) \right|^2 ]	\mathrm{d}s
	\\&\leq C \int_{t}^{T} \mathbb{E} \left[ \left| Y^{t, x +h}_n (s) - Y^{t, x }_n (s)\right|^2 \right]\mathrm{d}s 
	+ C \int_{t}^{T} \mathbb{E} \left[ \left| Z^{t, x +h}_n (s) - Z^{t, x }_n (s)\right|^2 \right]\mathrm{d}s 
	\\&+ C \int_{t}^{T} \mathbb{E} \left[ \left| f (s, X^{t, x+h }_n (s), Y^{t, x }_n (s), Z^{t, x }_n (s))  - f (s, X^{t, x }_n (s), Y^{t, x }_n (s), Z^{t, x }_n (s)) \right|^2 \right]\mathrm{d}s 
	\end{split}
\end{equation*}
As we have a dominated property formally provided by Lemma \ref{lem: Y dominated by X}, it implies that there exists a constant $C>0$ such that 
\begin{equation*}
	\begin{split}
	&\int_{t}^{T} \mathbb{E} [ \left| f(s, \Theta_n^{t, x+h} (s)) - f(s, \Theta_n^{t, x} (s)) \right|^2 ]	\mathrm{d}s
	\\&\leq C \, \mathbb{E} [ \left| \varphi (X^{t, x+h }_n (T)) -\varphi (X^{t, x }_n (T))\right|^2 ]
	\\&+ C \int_{t}^{T} \mathbb{E} \left[ \left| f (s, X^{t, x+h }_n (s), Y^{t, x }_n (s), Z^{t, x }_n (s))  - f (s, X^{t, x }_n (s), Y^{t, x }_n (s), Z^{t, x }_n (s)) \right|^2 \right]\mathrm{d}s.
	\end{split}
\end{equation*}
From the squared condition \eqref{ass: holder}
we can find some constant $\Lambda > 0$ such that 
	\begin{equation*}
		\begin{split}
		&
		 \mathbb{E}\left[ \left| \varphi (X^{t, x+h }_n (T)) -\varphi (X^{t, x }_n (T))\right|^2 \right.
		 \\&+ \left.
			\int_{t}^{T} \left| f (s, X^{t, x+h }_n (s), Y^{t, x }_n (s), Z^{t, x }_n (s))  - f (s, X^{t, x }_n (s), Y^{t, x }_n (s), Z^{t, x }_n (s))  \right|^2 \mathrm{d}s\right]
		\\& \leq \Lambda \mathbb{E}\left[	 \left|\varphi \left( X^{t, x+ h  }_n (T)\right) - \varphi \left( X^{t, x  }_n (T) \right)\right| 
		 +	\int_{t}^{T} |X^{t, x+ h  }_n (s) - X^{t, x  }_n (s)| \mathrm{d}s\right],
		\end{split}
	\end{equation*}
	see also Remark \ref{rem:Uniformly bounded Lipschitz continuous}. 
Thus, we have  a constant $C>0$ such that 
\begin{equation*}
\begin{split}
	&
	| u_{n}(t, x+h)-u_{n}(t, x) |^2  
	\\&\leq C 
	\left\{
	 \mathbb{E}[ |X^{t, x+ h  }_n (T) - X^{t, x  }_n (T)| ] 
	 +\int_{t}^{T} \mathbb{E}[ |X^{t, x+ h  }_n (s) - X^{t, x  }_n (s)| ]   \mathrm{d}s
	 \right\}.	
\end{split}
\end{equation*}
Then, it follows from Lemma \ref{lem: a key lemma: Lipschitz version} that there exists a positive constant $C$ such that
\begin{equation*}
	\sup_{x \in \mathbb{R}^{\dimX}}\sup_{0 \leq t \leq T} \sup_{n \in \mathbb{N}}\mathbb{E}[ |X^{t, x+ h  }_n (s) - X^{t, x  }_n (s)| ] \leq C |h|.
\end{equation*}
Therefore, we conclude that $u_{n}$ is $(1/2)$-Holder continuous with respect to spatial variables.

\end{proof}

\subsection{How to estimate $v_n$}
Roughly, $\{ v_n \}$ is given by the following implicit functions; for any $n \in \mathbb{N}$, 
\begin{equation*}
	\begin{split}
	v_{n} (s, x) &=  \left( \nabla_x u_{n} (s, x) \right) \, \sigma \left(s, x, u_{n} (s, x),  v_{n} (s, x) \right), \quad (s, x) \in [0, T] \times \mathbb{R}^{\dimX}.
	\end{split}
\end{equation*}
Thus, it has already contained the information of the derivatives of the decoupling field. 
Therefore, if we consider the derivative with respect to the spatial variable, we need to compute the {\it second derivative} as $\nabla_{x}^2 X_{n}^{t, x}$.  It is generally hard since we have 
\begin{equation*}
	\nabla_x u_n=\left(1-\left(\nabla_x u_n \right)\left(\nabla_z \sigma\right)\right)^{-1}\left(\left(\nabla_x^2 u_n\right) \sigma+\nabla_x u_n \nabla_x \sigma+\left(\nabla_x u_n \right)^2 \nabla_y \sigma\right).
\end{equation*}
In this paper, we consider an alternative method as follows. 
\begin{Lem}\label{lem:equicontinuous vn}
Under \eqref{ass: smooth} and \eqref{ass: holder},  
\begin{equation*}
\begin{split}
	\sup_{n \in \mathbb{N}}  \sup_{x \in \mathbb{R}^{\dimX}}  \sup_{0 \leq t  \leq T} 
	|  v_n (t, x+h) - v_n (t, x)| 
	\leq C |h| ^{1/2}	. 
\end{split}
\end{equation*}

\end{Lem}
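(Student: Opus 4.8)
The plan is to avoid differentiating the decoupling field and instead to estimate the initial value of the martingale integrand, using the identity $v_n(t,x)=Z^{t,x}_n(t)$, which holds because $X^{t,x}_n(t)=x$ and $Z^{t,x}_n(s)=v_n(s,X^{t,x}_n(s))$. First I would apply to the two flows started from $x+h$ and $x$ the a priori estimate for BSDEs recalled above, which also controls the $\mathscr{H}^2$-norm of the difference of the $Z$-components. Absorbing the $Y$- and $Z$-differences into the terminal and frozen-driver terms by the dominated property of Lemma \ref{lem: Y dominated by X}, and then using the squared H\"older bound \eqref{ass: holder} exactly as in the proof of Lemma \ref{lem:equicontinuous un}, I obtain
\begin{equation*}
\mathbb{E}\left[\int_t^T \left|Z^{t,x+h}_n(s)-Z^{t,x}_n(s)\right|^2 \mathrm{d}s\right] \le C\left(\mathbb{E}\left[\left|X^{t,x+h}_n(T)-X^{t,x}_n(T)\right|\right] + \int_t^T \mathbb{E}\left[\left|X^{t,x+h}_n(s)-X^{t,x}_n(s)\right|\right] \mathrm{d}s\right).
\end{equation*}
By Lemma \ref{lem: a key lemma: Lipschitz version} the right-hand side is at most $C|h|$, uniformly in $n$, $t$ and $x$.

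Next I would convert this time-integrated bound into the pointwise bound at the initial slice $s=t$. Setting $M^{t,x}_r:=Y^{t,x}_n(r)+\int_t^r f(s,\Theta^{t,x}_n(s))\,\mathrm{d}s$, so that $\mathrm{d}M^{t,x}_r=Z^{t,x}_n(r)\,\mathrm{d}\mathrm{W}(r)$ and $M^{t,x}_t=u_n(t,x)$, the It\^o isometry on $[t,t+\delta]$ gives
\begin{equation*}
\mathbb{E}\left[\int_t^{t+\delta}\left|Z^{t,x+h}_n(s)-Z^{t,x}_n(s)\right|^2 \mathrm{d}s\right] = \mathbb{E}\left[\left|\left(M^{t,x+h}_{t+\delta}-M^{t,x}_{t+\delta}\right)-\left(u_n(t,x+h)-u_n(t,x)\right)\right|^2\right].
\end{equation*}
Under \eqref{ass: smooth} the process $Z^{t,x}_n$ is continuous and $Z^{t,x}_n(t)=v_n(t,x)$ is deterministic, so dividing by $\delta$ and letting $\delta\downarrow0$ identifies the left-hand side with $|v_n(t,x+h)-v_n(t,x)|^2$. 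It therefore suffices to show that the right-hand side is $O(\delta|h|)$. Writing $\Delta Y_r:=Y^{t,x+h}_n(r)-Y^{t,x}_n(r)$, the right-hand side differs from $\mathbb{E}|\Delta Y_{t+\delta}-\Delta Y_t|^2$ only by the driver term $\int_t^{t+\delta}\big(f(s,\Theta^{t,x+h}_n)-f(s,\Theta^{t,x}_n)\big)\,\mathrm{d}s$, which is of higher order in $\delta$.

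The main obstacle is precisely the bound on $\mathbb{E}|\Delta Y_{t+\delta}-\Delta Y_t|^2$. Lemma \ref{lem:equicontinuous un} together with the forward Lipschitz estimate only yields $\mathbb{E}|\Delta Y_{t+\delta}|^2\le C\,\mathbb{E}|X^{t,x+h}_n(t+\delta)-X^{t,x}_n(t+\delta)|\le C|h|$ and likewise $|\Delta Y_t|^2\le C|h|$, hence the crude bound $\mathbb{E}|\Delta Y_{t+\delta}-\Delta Y_t|^2\le C|h|$, which lacks the factor $\delta$ needed to survive the division by $\delta$. In the non-degenerate setting the missing factor would be recovered from a Bismut--Elworthy--Li or Ma--Zhang representation of $Z^{t,x}_n(t)$, but the associated Malliavin weights involve $\sigma^{-1}$ and are unavailable here, since by \eqref{ass: degeneration} no ellipticity of $\sigma$ is assumed; this is exactly what makes the degenerate case delicate.

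I expect the resolution to come from the genuine short-time regularity of the forward flow together with the uniformity in $t$ of the estimate of the first paragraph: decomposing
\begin{equation*}
\Delta Y_{t+\delta}-\Delta Y_t=\big[u_n(t+\delta,X^{t,x+h}_n(t+\delta))-u_n(t,x+h)\big]-\big[u_n(t+\delta,X^{t,x}_n(t+\delta))-u_n(t,x)\big]
\end{equation*}
and exploiting the cancellation between the two bracketed increments should produce the factor $\delta$. Throughout I would use the relation $v_n(s,x)=\nabla_x u_n(s,x)\,\sigma(s,x,u_n(s,x),v_n(s,x))$ only to justify the time-continuity of $v_n$, thereby avoiding the second derivative $\nabla_x^2 X^{t,x}_n$ as the authors intend.
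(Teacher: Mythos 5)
Your first and last steps coincide with the paper's own proof: the $\mathscr{H}^2$-estimate obtained from Lemma \ref{lem: Y dominated by X} combined with the squared H\"older condition \eqref{ass: holder} and Lemma \ref{lem: a key lemma: Lipschitz version}, and then the identification
\begin{equation*}
	|v_n(t,x+h)-v_n(t,x)|^2
	=\lim_{\delta\downarrow 0}\frac{1}{\delta}\,
	\mathbb{E}\Bigl[\int_t^{t+\delta}\bigl|Z^{t,x+h}_n(s)-Z^{t,x}_n(s)\bigr|^2\,\mathrm{d}s\Bigr]
\end{equation*}
(using $Z^{t,x}_n(t)=v_n(t,x)$ deterministic and continuity of $s\mapsto Z^{t,x}_n(s)$) are exactly the paper's devices. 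But your proposal is not a proof: everything hinges on the localized bound $\mathbb{E}\int_t^{t+\delta}|Z^{t,x+h}_n(s)-Z^{t,x}_n(s)|^2\,\mathrm{d}s\le C\,\delta\,|h|$, and you explicitly concede you cannot establish it. Your global bound $C|h|$ carries no factor $\delta$ and is annihilated by the division; your detour through $\mathbb{E}|\Delta Y_{t+\delta}-\Delta Y_t|^2$ again yields only $C|h|$; and the closing paragraph (``exploiting the cancellation \dots should produce the factor $\delta$'') is a hope, not an argument. Note also that the time-regularity the paper proves for $u_n$ (Lemma \ref{lem:equicontinuous un w.r.t. time}) is only of order $|h|^{1/2}$ in the time increment, so the cancellation you invoke would have to be of a strictly finer nature than anything available in the paper.

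The paper closes this step differently: it never passes through the increment of $Y$, but asserts directly, for all $r\in[t,T]$,
\begin{equation*}
	\mathbb{E}\Bigl[\int_t^{r}\bigl|Z^{t,x+h}_n(s)-Z^{t,x}_n(s)\bigr|^2\,\mathrm{d}s\Bigr]\le C\,(r-t)\,|h|,
\end{equation*}
obtained by inserting the uniform-in-$s$ estimate $\sup_{s}\mathbb{E}\bigl[|X^{t,x+h}_n(s)-X^{t,x}_n(s)|\bigr]\le\sqrt{\dimX}\,|h|$ of Lemma \ref{lem: a key lemma: Lipschitz version} into the driver integral of the dominated-property estimate; in the paper's corresponding display the terminal $\varphi$-difference, which is $O(|h|)$ with no factor $(r-t)$, no longer appears, and that is precisely what makes the factor $(r-t)$ emerge. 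So your diagnosis that the localization at the initial time slice is the delicate point of the degenerate case is accurate --- it is exactly where the paper's write-up is tersest --- but as submitted your argument stops at that point and therefore does not prove Lemma \ref{lem:equicontinuous vn}.
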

\begin{proof}
Again, as we have a dominated property formally provided by Lemma \ref{lem: Y dominated by X}, it implies that there exists a constant $C>0$ such that 
\begin{equation*}
	\begin{split}
	&\int_{t}^{T} \mathbb{E} [ \left| Z^{t, x+h}_n (s) -Z^{t, x}_n (s) \right|^2 ]	\mathrm{d}s
	\\&\leq C \, \mathbb{E} [ \left| \varphi (X^{t, x+h }_n (T)) -\varphi (X^{t, x }_n (T))\right|^2 ]
	\\&+ C \int_{t}^{T} \mathbb{E} \left[ \left| f (s, X^{t, x+h }_n (s), Y^{t, x }_n (s), Z^{t, x }_n (s))  - f (s, X^{t, x }_n (s), Y^{t, x }_n (s), Z^{t, x }_n (s)) \right|^2 \right]\mathrm{d}s.
	\end{split}
\end{equation*}
It follows from the same argument of Lemma \ref{lem:equicontinuous un} that 
\begin{equation*}
	\mathbb{E} \left[ \int_{t}^{T} \left|  Z^{t, x+h}_n (s) -Z^{t, x}_n (s)\right|^2 \mathrm{d}s\right]	 \leq \Lambda
	 \mathbb{E}[ \int_{t}^{T} |X^{t, x+ h  }_n (s) - X^{t, x  }_n (s)| \mathrm{d}s] .
\end{equation*}
Therefore, we have for all $r \in [t, T]$, we have 
\begin{equation*}
	\mathbb{E} \left[ \int_{t}^{r} \left|  Z^{t, x+h}_n (s) -Z^{t, x}_n (s) \right|^2 \mathrm{d}s\right]\leq C (r-t) |h|, \quad h \in \mathbb{R}^{\dimX}. 
\end{equation*}
Putting $r=t+ \epsilon$ for $\epsilon>0$, we have 
\begin{equation*}
\begin{split}
	\frac{1}{\epsilon}\mathbb{E}\left[ \int_{t}^{t+\epsilon}  |Z_{n}^{t, x+h} (s) -Z_{n}^{t, x} (s) |^2 \mathrm{d}s\right]
	\leq C |h| , \quad  h \in \mathbb{R}^{\dimX}	. 
\end{split}
\end{equation*}
Letting $\epsilon \to 0$, we obtain that 
\begin{equation*}
\begin{split}
	|v_{n}(t, x+h) -v_{n}(t, x)  |^2 = 
	\mathbb{E}\left[  |Z_{n}^{t, x+h} (t) -Z_{n}^{t, x} (t) |^2 \right]
	\leq C |h|  , \quad  h \in \mathbb{R}^{\dimX}	. 
\end{split}
\end{equation*}

\end{proof}

\subsection{Degenerate and uniformly linear growth}
In order to select a sub-sequence of the series $(u_n, v_n)$, 
we need a linear growth condition independent of $n \in \mathbb{N}$.
Then, we consider a {\it frozen } solution such that
for all $t \in [0,T]$, it holds that for some $x_0 \in \mathbb{R}^{\dimX}$, 
	\begin{equation*}
		\left( X_{n}^{t, x_0} (r), Y_{n}^{t, x_0} (r), Z_{n}^{t, x_0} (r) \right) \equiv \left( x_0,  \varphi (x_0), 0 \right), \quad (n, r) \in \mathbb{N} \times [t, T].
	\end{equation*}
In fact, if we have the frozen solution, it holds that for any $t \leq s \leq T$ it holds that 
\begin{equation*}
	\mathbb{E} [|X^{t, x}_{n}(s)|] 
	\leq \mathbb{E} [|X^{t, x}_{n}(s)-x_0|] + |\varphi (x_0)| \leq  
	|x-x_0| + |\varphi (x_0)|, 
\end{equation*}
where the last inequality is followed from an exponential inequality, Lemma \ref{lem: a key lemma: Lipschitz version}. 
In fact,  we provide the following lemma. 
\begin{Lem}\label{lem:linear growth un vn}
Suppose that  \eqref{ass: smooth} and \eqref{ass: degeneration} hold. Then, 
	\begin{equation*}
	\begin{split}
		\sup_{(n, t) \in \mathbb{N}\times[0,T]}\left\{ | u_{n}(t, x_0) | + | v_{n}(t, x_0) | \right\}
		<\infty , \quad x_0 \in \mathcal{Z}_0.
	\end{split}
	\end{equation*}	
Moreover, if we have \eqref{ass: holder}, we obtain 
\begin{equation*}
\begin{split}
	\sup_{n  \in \mathbb{N}}\sup_{0 \leq t \leq T}| u_n(t, x) | &\leq C(1+ |x|), 
	\\ \sup_{n \in \mathbb{N}}   \sup_{0 \leq t \leq T} 
	|  v_n (t, x)| &\leq C(1+ |x|) 	, \quad x \in \mathbb{R}^{\dimX}. 
\end{split}
\end{equation*}
\end{Lem}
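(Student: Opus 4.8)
The plan is to prove the two assertions separately: first the boundedness at a degenerate point $x_0 \in \mathcal{Z}_0$ by exhibiting a frozen (constant) solution of the approximation scheme, and then to upgrade this to linear growth in $x$ by invoking the spatial equicontinuity already established in Lemmas \ref{lem:equicontinuous un} and \ref{lem:equicontinuous vn}.

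For the first part, fix $x_0 \in \mathcal{Z}_0$, which is possible by \eqref{ass: degeneration}. I would argue by induction on $n$ that the approximating triple is frozen at $x_0$, namely
\[
	\left( X_n^{t,x_0}(r), Y_n^{t,x_0}(r), Z_n^{t,x_0}(r) \right) \equiv \left( x_0, \varphi(x_0), 0 \right), \quad r \in [t,T],
\]
for every $t$ and $n$; equivalently $u_n(s, x_0) = \varphi(x_0)$ and $v_n(s, x_0) = 0$ for all $s \in [0,T]$. The base case $n=0$ holds by the definition $u_0 = \varphi$, $v_0 = 0$. For the inductive step, assume $u_{n-1}(s, x_0) = \varphi(x_0)$ and $v_{n-1}(s, x_0) = 0$ for all $s$. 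Substituting the constant path $X_n^{t,x_0} \equiv x_0$ into the forward equation of \eqref{decoupling FBSDE approximation via u and v}, the diffusion coefficient reduces to $\sigma(s, x_0, \varphi(x_0), 0)$, which vanishes by the definition of $\mathcal{Z}_0$; hence the constant process solves the forward SDE, and by pathwise uniqueness (the coefficients are Lipschitz under \eqref{ass: smooth}) it is the unique solution, so $X_n^{t,x_0} \equiv x_0$. Feeding this into the backward equation, the terminal value is $\varphi(x_0)$ and the driver is $f(s, x_0, \varphi(x_0), 0) = 0$, again by \eqref{ass: degeneration}; thus $(Y,Z) \equiv (\varphi(x_0), 0)$ solves the BSDE and, by uniqueness of the BSDE solution, equals $(Y_n^{t,x_0}, Z_n^{t,x_0})$. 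Evaluating the identities $Y_n^{t,x_0}(r) = u_n(r, X_n^{t,x_0}(r))$ and $Z_n^{t,x_0}(r) = v_n(r, X_n^{t,x_0}(r))$ at the starting time $r=t$ closes the induction and yields
\[
	\sup_{(n,t)} \left\{ |u_n(t, x_0)| + |v_n(t, x_0)| \right\} = |\varphi(x_0)| < \infty.
\]

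For the second part I would add \eqref{ass: holder} and simply compare a general $x$ with the degenerate point $x_0$. Applying Lemma \ref{lem:equicontinuous un} with increment $h = x - x_0$ gives $|u_n(t,x) - u_n(t,x_0)| \leq C |x - x_0|^{1/2}$, and the triangle inequality together with the first part yields $|u_n(t,x)| \leq C|x - x_0|^{1/2} + |\varphi(x_0)|$. Since $t \mapsto \sqrt{t}$ is subadditive and $\sqrt{t} \leq 1 + t$, one has $|x - x_0|^{1/2} \leq 1 + |x| + |x_0|^{1/2}$, with the last term depending only on the fixed point $x_0$; absorbing the $x_0$-dependent constants gives $|u_n(t,x)| \leq C(1 + |x|)$ uniformly in $n$ and $t$. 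The identical argument with Lemma \ref{lem:equicontinuous vn} and the value $v_n(t,x_0) = 0$ from the first part gives $|v_n(t,x)| \leq C(1+|x|)$.

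The only delicate point is the inductive step of the first part: one must check that the degeneracy condition at $x_0$ cancels both the diffusion coefficient and the driver \emph{exactly}, so that the constant triple is an admissible solution, and then that pathwise and BSDE uniqueness force it to be the actual solution. Everything else is a routine combination of the frozen values with the H\"older estimates; in particular the second part needs no new probabilistic input beyond the two equicontinuity lemmas.
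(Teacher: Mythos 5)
Your proof is correct and takes essentially the same route as the paper: the identical induction showing the approximating triple is frozen at $x_0 \in \mathcal{Z}_0$ (degeneracy kills the diffusion coefficient, pathwise uniqueness forces $X_n^{t,x_0} \equiv x_0$, then $Z \equiv 0$ and the constant $\varphi(x_0)$ solves the resulting ODE/BSDE by uniqueness), giving $u_n(t,x_0)=\varphi(x_0)$ and $v_n(t,x_0)=0$. The paper's written proof in fact stops after this frozen-point bound, so your second paragraph --- deducing the growth estimate from Lemmas \ref{lem:equicontinuous un} and \ref{lem:equicontinuous vn} via $|x-x_0|^{1/2}\leq 1+|x-x_0|$ --- correctly supplies the step the paper leaves implicit, and even yields the slightly stronger bound $C\left(1+|x|^{1/2}\right)$.
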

\begin{proof}
We shall show that for all $t \in [0,T]$, it holds that 
	\begin{equation*}
		\left( X_{n}^{t, x_0} (r), Y_{n}^{t, x_0} (r), Z_{n}^{t, x_0} (r) \right) \equiv \left( x_0, \varphi (x_0), 0 \right) , \quad (n, r) \in \mathbb{N} \times [t, T].
	\end{equation*}
	By the definition it holds when $n=1$. 
	Suppose that it holds for any fixed $k \geq 1$. 
	Consider 
	\begin{equation*}
		X^{t, x}_{k+1} (r) = x + \int_{t}^{r} \sigma \left( s, X^{t, x}_{k+1} (s), \varphi ( X^{t, x}_{k+1} (s) ), 0 \right) \mathrm{dW}(s).
	\end{equation*}
	It follows from $x \in \mathcal{Z}_0$ that $X^{t, x_0}_{k+1} (r)  \equiv x_0$, and which implies $Z_{k+1}^{t, x_0} (r) \equiv 0$ for all $t \leq r \leq T$. 
	Moreover, $Y_{k+1}^{t, x}$ is a solution to the ordinary differential equation, 
\begin{equation*}
	Y^{t, x}_{k+1} (r) = \varphi \left( x \right) + \int_{r}^{T} f \left( s, x , Y^{t, x}_{k+1} (s), 0 \right) \mathrm{d}s, \quad r \in [t,T]. 
\end{equation*}
	In particular,  when $x = x_0$,  $Y_{k+1}^{t, x} (r) = \varphi (x_0)$ holds for $t \leq r \leq T$. Thus, we conclude for any $t \geq 0$ and $ (n, r) \in \mathbb{N}\times[t,T]$,
	\begin{equation*}
	\begin{split}
		Y^{t, x_0}_{n} (r) = u_{n}(r, x_0) = \varphi (x_0) , \quad 
		Z^{t, x_0}_{n} (r) = v_{n}(r, x_0) =0.
	\end{split}
	\end{equation*}
	It follows that 
	\begin{equation*}
	\begin{split}
		\sup_{(n, t) \in \mathbb{N}\times[0,T]}\left\{ | u_{n}(t, x_0) | + | v_{n}(t, x_0) | \right\}
		<\infty  .
	\end{split}
	\end{equation*}	
\end{proof}

\subsection{Markov property}
The forward process is given by the diffusion process, 
thus it satisfies a Markov property. This is convenient to study the continuity with respect to time for the decoupling field.
\begin{Lem}
\label{lem:equicontinuous un w.r.t. time}
Under \eqref{ass: smooth}, \eqref{ass: holder} and \eqref{ass: degeneration},  it holds that 
\begin{equation*}
		\sup_{(n, x) \in \mathbb{N}\times\mathbb{R}^{\dimX}}  \sup_{0 \leq t  \leq T}\mathbb{E} |u_{n}(t+h, x) -u_{n}(t, x)  | \leq  C |h|^{1/2} . 
\end{equation*} 
\end{Lem}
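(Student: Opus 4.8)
The plan is to exploit the Markov property of the driftless forward diffusion in order to reduce the time increment of $u_n$ to a short-time average, and then to control that average by the degeneracy structure together with the uniform spatial regularity already established in Lemmas \ref{lem:equicontinuous un} and \ref{lem:equicontinuous vn}. First I would record the semigroup identity. Since $\{Y_n^{t,x}(r) + \int_t^r f(s,\Theta_n^{t,x}(s))\,\mathrm{d}s\}_{t\le r\le T}$ is a martingale (exactly as in the proof of Lemma \ref{lem:equicontinuous un}) and $Y_n^{t,x}(r) = u_n(r, X_n^{t,x}(r))$, evaluating at $r=t+h$ and taking expectations gives
\[
u_n(t,x) = \mathbb{E}\left[ u_n\big(t+h, X_n^{t,x}(t+h)\big)\right] + \mathbb{E}\left[\int_t^{t+h} f\big(s,\Theta_n^{t,x}(s)\big)\,\mathrm{d}s\right].
\]
Subtracting $u_n(t+h,x)$ yields the decomposition $u_n(t,x)-u_n(t+h,x) = I + II$ with $I = \mathbb{E}[u_n(t+h, X_n^{t,x}(t+h)) - u_n(t+h,x)]$ and $II = \mathbb{E}[\int_t^{t+h} f(s,\Theta_n^{t,x}(s))\,\mathrm{d}s]$; the Markov property is precisely what produces this clean split into a spatial-displacement piece and a short running-cost piece.

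For term $I$ I would invoke the spatial $(1/2)$-H\"older continuity of $u_n$ from Lemma \ref{lem:equicontinuous un}, whose constant is uniform in $(n,t)$, to obtain $|I| \le C\,\mathbb{E}[|X_n^{t,x}(t+h) - x|^{1/2}] \le C\,(\mathbb{E}[|X_n^{t,x}(t+h)-x|])^{1/2}$ by Jensen. For term $II$ I would use the degeneracy assumption \eqref{ass: degeneration}: fixing $x_0 \in \mathcal{Z}_0$, the identity $f(s,x_0,\varphi(x_0),0)=0$, the frozen solution $u_n(\cdot,x_0)=\varphi(x_0)$, $v_n(\cdot,x_0)=0$ of Lemma \ref{lem:linear growth un vn}, the Lipschitz continuity of $f$ from \eqref{ass: smooth}, the sublinear condition \eqref{ass: holder}, and the $(1/2)$-H\"older continuity of $u_n,v_n$ (Lemmas \ref{lem:equicontinuous un} and \ref{lem:equicontinuous vn}) together give the pointwise bound $|f(s,\Theta_n^{t,x}(s))| \le C(1 + |X_n^{t,x}(s)-x_0|^{1/2})$. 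Integrating over the short interval, $|II| \le C\int_t^{t+h}(1 + \mathbb{E}[|X_n^{t,x}(s)-x_0|^{1/2}])\,\mathrm{d}s$.

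Both terms are thereby reduced to moment estimates on the forward flow, and this is where the exponential inequality of Lemma \ref{lem: a key lemma: Lipschitz version} is meant to do the work: comparing $X_n^{t,x}$ with the frozen trajectory at $x_0$ controls $\mathbb{E}[|X_n^{t,x}(s)-x_0|]$ uniformly in $(n,t,s)$, while comparing the increment over $[t,t+h]$ controls $\mathbb{E}[|X_n^{t,x}(t+h)-x|]$ with constants independent of $n$, $t$ and $x$, despite the linear growth of $\sigma$ and the sublinear growth of $f$ and $\varphi$. Combining these with the two displays above is intended to produce $|I| + |II| \le C|h|^{1/2}$ uniformly in $(n,x,t)$.

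The hard part will be the last step, namely squeezing out the target rate $|h|^{1/2}$ while keeping the constant uniform in $x$. A naive pathwise treatment reintroduces exactly the $x$-dependent blow-up flagged at the start of this section, so one cannot afford the crude bounds; everything must be routed through the degenerate structure via Lemma \ref{lem: a key lemma: Lipschitz version}. The most delicate point is term $I$: its estimate is governed by the diffusive scaling of the forward increment, and passing through the $(1/2)$-H\"older bound alone would cost a further square root and only deliver the parabolic-scaling rate associated with a merely $(1/2)$-H\"older spatial modulus. Matching the claimed $|h|^{1/2}$ therefore forces either a linear-in-$h$ control of $\mathbb{E}[|X_n^{t,x}(t+h)-x|]$ uniformly in $x$, or a treatment of $I$ that bypasses the H\"older bound; securing this is the crux of the argument and the step I would expect to require the full strength of the structural inequality.
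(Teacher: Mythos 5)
Your skeleton is essentially the paper's. The semigroup identity obtained from the martingale property, the split of $u_n(t,x)-u_n(t+h,x)$ into a spatial-displacement piece at time $t+h$ and a short-time driver integral, the use of the frozen solution at $x_0\in\mathcal{Z}_0$ to handle the driver piece (the paper writes $f(s,\Theta_n^{t,x})=\bigl(f(s,\Theta_n^{t,x})-f(s,\Theta_n^{t,x_0})\bigr)+f(s,\Theta_n^{t,x_0})$ and notes $\mathbb{E}\bigl[\int_t^{t+h}f(s,\Theta_n^{t,x_0}(s))\,\mathrm{d}s\bigr]=u_n(t,x_0)-\mathbb{E}[\varphi(X^{t,x_0}(t+h))]=0$, which is your term $II$ in slightly different clothing), and the reduction of everything to first-moment estimates on the forward flow via the Markov/flow property and Lemma~\ref{lem: a key lemma: Lipschitz version} --- all of this matches the paper's route. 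One cosmetic difference: the paper does not pass term $I$ through the H\"older modulus of $u_n$; it re-runs the BSDE stability estimate (Lemma~\ref{lem: Y dominated by X} together with \eqref{ass: holder}) to get $|u_n(t+h,x)-u_n(t,x)|^2\le C\{\sup_r\mathbb{E}[|X_n^{t+h,x}(r)-X_n^{t,x}(r)|]+h\sup_r\mathbb{E}[|X_n^{t,x}(r)-X_n^{t,x_0}(r)|]\}$. By Jensen, your route via Lemma~\ref{lem:equicontinuous un} yields the same power, since that modulus was itself derived from the same estimate, so nothing is lost or gained there.

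The genuine gap is the one you flag yourself: the proposal never proves the moment estimate that closes the argument, ending with the admission that ``securing this is the crux.'' A plan with an acknowledged hole at the decisive step is not a proof. The missing ingredient is concrete and is where \eqref{ass: degeneration} enters a second time: the paper proves the short-time bound $\mathbb{E}[|X_n^{t,x}(t+h)-x|^2]\le C|h|$ with $C$ independent of $n$, by It\^o isometry, the \emph{uniform-in-$n$} linear growth of $(u_{n-1},v_{n-1})$ from Lemma~\ref{lem:linear growth un vn} (which itself rests on the frozen solution), and Gronwall; combined with the flow identity $X_n^{t,x}(r)=X_n^{t+h,y}(r)$ for $y=X_n^{t,x}(t+h)$ and the bound $\mathbb{E}[|X_n^{t+h,x}(r)-X_n^{t+h,y}(r)|]\le\sqrt{\dimX}\,|x-y|$ of Lemma~\ref{lem: a key lemma: Lipschitz version}, this controls $\sup_r\mathbb{E}[|X_n^{t+h,x}(r)-X_n^{t,x}(r)|]$ by $\sqrt{\dimX}\,\mathbb{E}[|x-X_n^{t,x}(t+h)|]$. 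You should also know that your power-counting worry is arithmetically well founded even against the paper's own chain: the displayed inequalities bound the \emph{square} of the time increment by the first moment of the flow distance, which the Gronwall step controls at order $h^{1/2}$, and the stated rate $|h|^{1/2}$ is read off as though the intermediate inequality bounded the increment itself; the paper does not exhibit the linear-in-$h$ first-moment control that you correctly identify as what the rate would require (and which cannot hold for a generic driftless diffusion, whose increments scale like $h^{1/2}$ in $L^1$). So your diagnosis of the crux is accurate, but to count as a proof you would need to supply the Gronwall/linear-growth estimate and then confront the final exponent honestly rather than leave both to the structural inequality.
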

\begin{proof}
For any $ (n, t, x) \in \mathbb{N}\times[0,T]\times \mathbb{R}^{\dimX}$, we consider 
\begin{equation*}
	\begin{split}
	&u_{n}(t+h, x) - u_{n}(t, x) 
	\\&= \mathbb{E} \left[
	\varphi (X_{n}^{t+h, x}(T)) - \varphi (X_{n}^{t, x}(T))  + 
	\int_{t+h}^{T} f(s, \Theta_n^{t+h, x} (s)) \mathrm{d}s 
	-\int_{t}^{T} f(s, \Theta_n^{t, x} (s)) \mathrm{d}s 
	\right]
	\\&= \mathbb{E} \left[
	\varphi (X_{n}^{t+h, x}(T)) - \varphi (X_{n}^{t, x}(T))  + 
	\int_{t+h}^{T} f(s, \Theta_n^{t+h, x} (s)) - f(s, \Theta_n^{t, x} (s)) \mathrm{d}s 
	\right]
	\\&- \mathbb{E} \left[
	\int_{t}^{t+h} f(s, \Theta_n^{t, x} (s)) - f(s, \Theta_n^{t, x_0} (s)) \mathrm{d}s 
	+\int_{t}^{t+h} f(s, \Theta_n^{t, x_0} (s))  \mathrm{d}s 
	\right]. 
	\end{split}
\end{equation*}
For the last integral term, we note that 
\begin{equation*}
	\mathbb{E} \left[
	\int_{t}^{t+h} f(s, \Theta_n^{t, x_0} (s))  \mathrm{d}s 
	\right]
	= u_{n}(t, x_0)- \mathbb{E}[\varphi \left( X^{t, x_0}(t+h) \right) ]=0.
\end{equation*}
Therefore, it follows from the same argument that there exists $C>0$ such that 
for all $ t+h \leq r \leq T$, 
\begin{equation*}
\begin{split}
	& | u_{n}(t+h, x) - u_{n}(t, x)|^2  
	\\ &\leq 
	C \left\{ \sup_{0 \leq r \leq T} \mathbb{E} [ |X^{t+h, x}_n (r)- X^{t, x}_n(r)| ]+
		h \sup_{0 \leq r \leq T} \mathbb{E} [ |X^{t, x}_n (r)- X^{t, x_0}_n(r)| ] 
	\right\}.	
\end{split}
\end{equation*}
By the Markov property, it holds that for $t+h \leq r \leq T$, 
\begin{equation*}
\begin{split}
	& X_n^{t, x}(r)  =  X_n^{t+h , y}(r),  \quad 
	\quad  y = X_n^{t, x } (t+h). 
\end{split}
\end{equation*}
Moreover, it follows from Lemma \ref{lem: a key lemma: Lipschitz version} that 
\begin{equation*}
	\begin{split}
		&\mathbb{E} \left[ |X^{t+h, x}_n (r) - X^{t, x}_n (r) | \right]
		\\&=\mathbb{E} \left[ \mathbb{E}\left[|X^{t+h, x}_n (r) - X^{t+h, y}_n (r) |\right] |_{y = X_n^{t, x } (t+h)} \right]
		\\&\leq  \sqrt{l} \mathbb{E} \left[ | x - X_n^{t, x } (t+h)  | \right]. 
	\end{split}
\end{equation*}
It follows the linear growth property of $\sigma$ from  \eqref{ass: smooth} that for any $t \leq r \leq t+h$,  we have 
\begin{equation*}
	\begin{split}
		\mathbb{E} \left[ | X_n^{t, 0 } (r)  |^2  \right]
		&\leq 
		\mathbb{E} \left[ \int_{t}^{r} | \sigma (s, X^{t, 0}_n (s), u_{n-1}(s, X^{t, 0}_n (s)), v_{n-1}(s, X^{t, 0}_n (s)) |^2 \mathrm{d}s \right] 
		\\ &\leq C|h| + 
		C \mathbb{E} \left[ \int_{t}^{r} | X^{t, 0}_n (s)|^2 + | u_{n-1}(s, X^{t, 0}_n (s))|^2 + | v_{n-1}(s, X^{t, 0}_n (s))|^2  \mathrm{d}s \right] .
	\end{split}
\end{equation*}
Notify that $u_n$ and $v_n$ are uniformly linear growth with respect to $n$. Thus, we have a constant $C>0$ such that  
\begin{equation*}
	\begin{split}
		& \mathbb{E} \left[\int_{t}^{r}  | u_{n-1}(s, X^{t, 0}_n (s))|^2 + | v_{n-1}(s, X^{t, 0}_n (s))|^2  \mathrm{d}s  \right] 
		\\ &\leq C\left\{ |h| + 
		 \mathbb{E} \left[ \int_{t}^{r} | X^{t, 0}_n (s)|^2 \mathrm{d}s\right]  \right\}.
	\end{split}
\end{equation*}
It follows from the Gronwall inequality that there exists $C>0$ such that 
\begin{equation*}
	\mathbb{E} \left[ | X_n^{t, 0 } (t+h)  |^2  \right] \leq C |h|.
\end{equation*}
Thus, we concludes that 
\begin{equation*}
	\sup_{ 0 \leq t \leq T}|u_n(t+h, x) -u_n(t, x)|  \leq C |h|^{1/2}.
\end{equation*}
\end{proof}

\begin{Rem}[Uniformly bounded Lipschitz continuous]\label{rem:Uniformly bounded Lipschitz continuous}
	We note that Lipschitz continuous and uniformly bounded functions for spatial forward variable $x$ satisfies \eqref{ass: holder} since 
	\begin{equation*}
		| \varphi (x+h) - \varphi (x) | \leq 
		\begin{cases}
			L_{\varphi, x} |h| \leq L_{\varphi, x} |h|^{1/2}, 			\quad &|h| \leq 1,\\
			2 \| \varphi \|_{\infty} \leq 2 \| \varphi \|_{\infty} |h|^{1/2}, \quad &|h| >1 .
		\end{cases}
	\end{equation*}
	where $\| \cdot  \|_{\infty}$ stands for the sup-norm; $\| \varphi \|_{\infty} := \sup_{x \in \mathbb{R}^{\dimX} } | \varphi  (x) |$. 
\end{Rem}

\section{Global solutions}\label{sec:Global solutions}
Gathering the above results, we shall show the existence and uniqueness of the global solution to 
the drift-less forward-backward SDEs \eqref{drifted-less FBSDEs}. 
\begin{Thm}
Suppose that Assumption \eqref{ass: all condition} holds.  
Then, there exists a solution to the FBSDE \eqref{drifted-less FBSDEs}. 
Moreover, 
when $\dimX =1$, it is a unique solution. 
\end{Thm}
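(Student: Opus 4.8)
The plan is to realize the global solution as a limit point of the approximation sequence $(u_n, v_n)$ and then recover the triple $(X, Y, Z)$ by solving the decoupled forward and backward equations against the limiting decoupling field. First I would invoke an Arzel\`a--Ascoli argument: Lemma \ref{lem:linear growth un vn} supplies uniform boundedness on each compact set, Lemmas \ref{lem:equicontinuous un} and \ref{lem:equicontinuous vn} supply uniform $(1/2)$-H\"older continuity in the spatial variable, and Lemma \ref{lem:equicontinuous un w.r.t. time} supplies equicontinuity in time. A diagonal extraction over an exhausting sequence of compacts then yields a subsequence, still denoted $(u_n, v_n)$, converging locally uniformly to a pair $(u, v)$ which itself is $(1/2)$-H\"older in $x$ and of linear growth. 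Consequently the coefficient $\widetilde\sigma(s, x) := \sigma(s, x, u(s, x), v(s, x))$ is $(1/2)$-H\"older in $x$, being a composition of the Lipschitz map $\sigma$ with the H\"older maps $u$ and $v$, and satisfies a linear growth bound.

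Next I would study the forward equation driven by the limiting coefficient,
\begin{equation*}
	X^{t, x}(r) = x + \int_t^r \widetilde\sigma(s, X^{t, x}(s)) \, \mathrm{d}\mathrm{W}(s), \quad t \leq r \leq T.
\end{equation*}
Linear growth and continuity of $\widetilde\sigma$ yield a weak solution with uniform moment bounds; when $\dimX = 1$ the $(1/2)$-H\"older modulus is exactly the Yamada--Watanabe threshold \cite{YW}, so pathwise uniqueness holds, and combined with weak existence this produces a unique strong solution. To identify this $X$ as the limit of the scheme I would pass to the limit in \eqref{decoupling FBSDE approximation via u and v}: the diffusion coefficients $\sigma(s, \cdot, u_{n-1}, v_{n-1})$ converge locally uniformly to $\widetilde\sigma$, the laws of $\{X_n^{t, x}\}$ are tight by the uniform moment estimates obtained through Lemma \ref{lem: a key lemma: Lipschitz version}, and any weak limit solves the martingale problem associated with $\widetilde\sigma$; pathwise uniqueness then forces the whole sequence to converge to $X^{t, x}$. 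Defining $(Y, Z)$ as the unique BSDE solution driven by $X^{t, x}$ and letting $n \to \infty$ in the identities $Y_n^{t, x}(r) = u_n(r, X_n^{t, x}(r))$ and $Z_n^{t, x}(r) = v_n(r, X_n^{t, x}(r))$ yields $Y^{t, x}(r) = u(r, X^{t, x}(r))$ and $Z^{t, x}(r) = v(r, X^{t, x}(r))$, so in particular $\sigma(s, X, Y, Z) = \widetilde\sigma(s, X)$ and $(X, Y, Z)$ solves \eqref{drifted-less FBSDEs}.

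For uniqueness when $\dimX = 1$, two solutions have forward components solving the same one-dimensional SDE with $(1/2)$-H\"older diffusion, so Yamada--Watanabe pathwise uniqueness forces them to coincide, after which the standard BSDE a priori estimate recalled at the start of this section forces the backward components to agree. The main obstacle is precisely the passage to the limit for the forward process: because $\widetilde\sigma$ is only $(1/2)$-H\"older and the convergence $(u_n, v_n) \to (u, v)$ is merely local uniform, the usual Gronwall-type $L^2$ stability estimate for SDEs is unavailable, and one must instead argue through tightness and a weak-convergence/martingale-problem identification, closing the loop with Yamada--Watanabe pathwise uniqueness rather than with a contraction. This is also where the restriction to $\dimX = 1$ for uniqueness enters in an essential way.
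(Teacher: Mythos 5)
Your extraction of $(u, v)$ via Arzel\`a--Ascoli plus diagonalization, and your final Yamada--Watanabe step for $\dimX = 1$, match the paper. The genuine gap is in the middle: you identify the limit of $\{X_n^{t,x}\}$ through tightness and the martingale problem for $\widetilde\sigma$, and you close the identification with pathwise uniqueness. But pathwise uniqueness from \cite{YW} is only available when $\dimX = 1$; for $\dimX > 1$ the martingale problem for a merely $(1/2)$-H\"older, possibly degenerate coefficient need not be well posed, so your argument cannot pin down the limit and your existence proof silently becomes one-dimensional, whereas the theorem asserts existence for general $\dimX$ (only uniqueness is restricted to $\dimX = 1$). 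Even if you settle for a single subsequential weak limit, you obtain a weak solution on a new stochastic basis, and you must still verify that $Z = v(\cdot, X)$ is the martingale-representation integrand of the backward equation with respect to the filtration of the new Brownian motion; your proposal does not address this adaptedness issue, which the paper never faces because its convergence takes place on the original space.

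The paper handles the forward stability by doing precisely what you declare unavailable: a Gronwall-type $L^2$ estimate adapted to the H\"older modulus. From local uniform convergence and the uniform $(1/2)$-H\"older bounds it derives
\begin{equation*}
	\left| \sigma_k(s, x_1) - \sigma(s, x_2) \right|^2 \leq C\left( |x_1 - x_2|^2 + |x_1 - x_2| + \delta_k^2 \right),
\end{equation*}
so that $g_k(r) := \mathbb{E}\bigl[\sup_{t \leq u \leq r} |X_{n(k)}(u) - X(u)|^2\bigr]$ satisfies $g_k(r) \leq C \int_t^r \bigl( \sqrt{g_k(s)} + \delta_k^2 \bigr)\,\mathrm{d}s$ after absorbing the quadratic term with an exponential weight; reverse Fatou and an Osgood-type comparison then yield $\limsup_k g_k \equiv 0$, i.e.\ $X_{n(k)} \to X$ in $\mathscr{S}^2$ on the same probability space and in every dimension, after which $(Y_{n(k)}, Z_{n(k)}) = (u_{n(k)}(\cdot, X_{n(k)}), v_{n(k)}(\cdot, X_{n(k)}))$ converges in $\mathscr{S}^2 \times \mathscr{H}^2$ and the limit triple solves \eqref{drifted-less FBSDEs} directly. (Your instinct that this step is delicate is not baseless: $\rho(u) = \sqrt{u}$ fails the Osgood divergence test, so the inequality $g(r) \leq C \int_t^r \sqrt{g(s)}\,\mathrm{d}s$ admits nontrivial solutions and the paper's concluding invocation of Osgood deserves scrutiny; but the $L^2$ stability route is the paper's actual mechanism, and replacing it by a uniqueness-based weak-convergence identification trades that difficulty for a dimensional restriction the theorem does not make.) Your uniqueness argument for $\dimX = 1$ coincides with the paper's and is fine.
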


\begin{proof}
It follows from Lemma \ref{lem:equicontinuous un} and \ref{lem:equicontinuous vn} that 
the continuous functions $\{ (u_{n}, v_{n}) \}_{n \in \mathbb{N}}$ 
satisfies uniformly equicontinuous and uniformly bounded. 
Applying Cantor's diagonal argument, we obtain that  $\{ (u_{n}, v_{n}) \}_{n \in \mathbb{N}}$ has a subsequence such that there exists a pair of continuous functions $(u, v)$ such that 
\begin{equation*}
	\lim_{k \to \infty} \delta_k := 
	\lim_{k \to \infty}| (u_{n(k)}, v_{n(k)}) (s, x) - (u, v) (s, x) |=0 , \quad (s, x) \in [0,T] \times \mathbb{R}^{\dimX}, 
\end{equation*}
where we denote $\delta_k =| (u_{n(k)}, v_{n(k)}) (s, x) - (u, v) (s, x) |$ for convenient. 
We note that as the closed interval $[0,T]$ is compact set, 
the above convergence holds uniformly with respect to time variable. 

Since $ (u_{{n(k)}}, v_{{n(k)}}) $ is 
satisfies Lipschitz continuous for all $k \in \mathbb{N}$ and 
$(u, v)$ is a continuous function, 
then there exists the corresponding weak solutions such that 
\begin{equation*}
\begin{split}
		X_{n(k)}(r) &= x 
		+ \int_{t}^{r} {\sigma}_k \left( s, X_{n(k)} (s)  \right)\mathrm{dW}(s), \quad t \leq r \leq T, 
		\\X(r) &= x + \int_{t}^{r} \sigma \left( s, X (s)  \right)\mathrm{dW}(s), \quad t \leq r \leq T, 
\end{split}
\end{equation*}
where we denote $\sigma_k (x) = \sigma (s, x, u_{n(k)-1}(s, x), v_{n(k)}(s, x) )$ and $\sigma (x) = \sigma (s, x, u(s, x), v(s, x) )$ for $(s, x, k) \in [0,T] \times \mathbb{R}^{\dimX} \times \mathbb{N}$. 
Note that the weak solution exists if the diffusion coefficient is continuous, see \cite{skorokhod1965studies}. 

Next, we shall show that the series $\{ X_{n(k)} \}$ converges to $X$ in $\mathscr{S}^2$. 
It follows from Lemma \ref{lem:equicontinuous un} and the point-wise convergence that it holds for all $(s, x, h)\in [0,T] \times \mathbb{R}^{\dimX}\times \mathbb{R}^{\dimX} $, 
\begin{equation*}
\begin{split}
	&|  u (s, x+h) - u (t, x)| \leq C |h| ^{1/2}, 
	\\&|  v (s, x+h) - v (t, x)| \leq C |h| ^{1/2}	.
\end{split}
\end{equation*}
Therefore, there exists a constant $C>0$ such that for all $s \in [0,T]$, $x_1, x_2 \in \mathbb{R}^{\dimX}$ and $k \in \mathbb{N}$ it holds that 
\begin{equation*}
	\begin{split}
		\left| {\sigma}_k \left( s, x_1  \right)-\sigma \left( s, x_2 \right) \right|^2 
		\leq C \left( |x_1 -x_2|^2 + |x_1 -x_2| +\delta_k^2 \right).
	\end{split}
\end{equation*}
It leads that there exists a constant $C>0$ such that 
for all $t \leq r \leq T$ we have 
\begin{equation*}
	\begin{split}
		\left| X_{n(k)}(r) - X(r)\right|^2 
		\leq C \int_{t}^{r} \left( |X_{n(k)}(s) - X(s)|^2 + |X_{n(k)}(s) - X(s)| +\delta_k^2 \right) \mathrm{d}s. 
	\end{split}
\end{equation*}
Denoting $g_{k}(s) = \mathbb{E}[\sup_{ t \leq u \leq s } \left| X_{n(k)}(u) - X(u) \right|^2]$ for $s \in [t,T]$ and Jensen's inequality, we obtain 
\begin{equation*}
	\begin{split}
		g_{k}(r)
		\leq C \int_{t}^{r} \left( g_{k}(s) + \sqrt{g_{k}(s)}+\delta_k^2 \right) \mathrm{d}s , \quad t\leq r \leq T, \ k \in \mathbb{N}. 
	\end{split}
\end{equation*}
For all $ L>0$, we have that 
\begin{equation*}
	\begin{split}
		g_{k}(r) e^{-C L r}
		\leq C \int_{t}^{r} \left( (1-L) g_{k}(s) + \sqrt{g_{k}(s)}+\delta_k^2 \right) e^{-s L}\mathrm{d}s , \quad t\leq r \leq T, \ k \in \mathbb{N}. 
	\end{split}
\end{equation*}
Thus, putting  $L>1$, we obtain a constant $C>0$ such that  for all $t\leq r \leq T, \ k \in \mathbb{N}$, 
\begin{equation*}
	\begin{split}
		g_{k}(r) 
		&\leq C \int_{t}^{r} \left( \sqrt{g_{k}(s)}+\delta_k^2 \right) \mathrm{d}s .
	\end{split}
\end{equation*}
Let us consider the series  of function $\{g_k (r)\}_{k \in \mathbb{N}}$. It follows from Lemma \ref{lem:linear growth un vn} that for all $r \in [t,T]$ and $k \in \mathbb{N}$,  
\begin{equation*}
	  g_{k}(r) \leq  \sup_{k \in \mathbb{N}}\mathbb{E} \left[ \sup_{ t \leq r \leq T } \left| X_{n(k)}(r) \right|^2 \right] < \infty. 
\end{equation*}
Therefore, applying the reverse Fatou's lemma and putting $g(r)=\limsup_{k \to \infty} g_k (r)$ for $r \in [t,T]$ and we obtain the following inequality, 
\begin{equation*}
	\begin{split}
		g(r) 
		&\leq C  \int_{t}^{r} \sqrt{g(s)} \mathrm{d}s , \quad r \in [t,T].
	\end{split}
\end{equation*}
Thanks to the Osgood's criterion that we have $g \equiv 0$. 
Therefore, we conclude 
\begin{equation*}
	X_{n(k)}  \xrightarrow[\| \cdot \|_{\mathscr{S}^2 } ]{k \to \infty}  X.
\end{equation*}
Furthermore, we obtain the convergence of the backward process: 
\begin{equation*}
	(Y_{n(k)}, Z_{n(k)}) = \left( u_{k} (\cdot, X_{n(k)} ), v_{n(k)} (\cdot, X_{n(k)} ) \right) \xrightarrow[\| \cdot \|_{\mathscr{S}^2 \times \mathscr{H}^2 } ]{k \to \infty}  (Y, Z) =  \left( u(\cdot, X ), v (\cdot, X ) \right) .
\end{equation*}
As a map $s \mapsto v_{n(k)}(s, x)$ for $x \in \mathbb{R}^{\dimX}$ may be discontinuous, 
the set of the discontinuity points in $v_{n(k)}(\cdot, x)$ is at most countable. Thus, we have 
\begin{equation*}
	\mathbb{P}\left(\lim _{k \rightarrow \infty} v_{n(k)}(s, X(s)) =v(s, X(s)) \right)=1, \quad t \leq s \leq T. 
\end{equation*}
Then, the pair $(X, Y, Z)$ satisfies the system of FBSDE \eqref{drifted-less FBSDEs}. Finally, we obtain the existence of the solution of the FBSDE.  

In particular, for one-dimensional case, 
the uniqueness is followed from Yamada-Watanabe theorem \cite{YW} that of the forward SDEs's uniqueness with the $(1/2)$-H\"older continuous diffusion coefficients.  
\end{proof}

\begin{Rem}[Existence for ODEs and SDEs]\label{lem:Existence for ODEs and SDEs}
	Under the Lipschitz condition, the existence follows on the forward drift-less SDEs. 
	For ODEs, there is an example such that no global solution exists: 
	\begin{equation}\label{ex: Fromm's example}
		\begin{dcases}
		 X(r) &=x  +    \int_{t}^{r} L_{b, y} Y (s)  \mathrm{d}s  + \int_{t}^{r} L_{\sigma, y }Y(s)  \mathrm{dW}(s)
	\\   Y(r) &=  X (T) - \int_{r}^{T} Z (s) \mathrm{dW}(s), \quad  r \in [t , T ] .
		\end{dcases}
	\end{equation}
	
	If $L_{b, y}  =1$, $L_{\sigma, y }=0$ and $T>1$ hold, then 
	it has a local solution such that it can not be the global solution since 
	\begin{equation*}
		 (X, Y, Z)(r) = \left(x+ \frac{x(r-t)}{1- (T-t)} ,  \frac{x}{1- (T-t)},  0 \right) , \quad t \in (T-1,T]. 
	\end{equation*}	
	The system \eqref{ex: Fromm's example} is equivalent to 
	\begin{equation*}
		\begin{dcases}
		 X(r) &=x  +  \int_{t}^{r} Z (s) \mathrm{dW}(s) ,
	\\   Y(r) &=  X (T) -  \int_{r}^{T} Y (s)  \mathrm{d}s  - \int_{r}^{T} Z (s) \mathrm{dW}(s), \quad  r \in [t , T ] .
		\end{dcases}		
	\end{equation*}
	Therefore, for the above example, the unique and existence admits if and only if the initial values start only from zero; $x = 0\Longleftrightarrow \mathcal{Z}_0 \neq \emptyset$.
	
	If $L_{b, y}  =0$ and $L_{\sigma, y }=1$ hold, it is  a degenerate case;
	\begin{equation*}
		\begin{dcases}
		 X(t) &= x +  \int_{t}^{r} Y(s)  \mathrm{dW}(s), 
	\\   Y(t) &= X (T) - \int_{r}^{T} Z (s) \mathrm{dW}(s), \quad  t \in[0, T]. 
		\end{dcases}
	\end{equation*}
	Our result shows that it has a solution, in fact it has the following expression, 
	\begin{equation*}
		 X(t) = Y(t) = Z(t) = x \exp \left(\frac{1}{2}t -W(t) \right), \quad u(t, x)=x. 
	\end{equation*}
\end{Rem}

\begin{Rem}[Comparing the condition: $ L_{\sigma,z}L_{\varphi,x}  \neq 1 $]	
	The uniqueness is a different problem for the diffusion non-degenerate coefficients. 
	In fact, 
	for arbitrary progressive measurable $\sigma_0$, let us consider one dimensional case such that 
	\begin{equation*}
	\begin{dcases}
	X ( t )  &=   X  ( 0 ) + (1- L_{\sigma,z}L_{\varphi,x} )^{-1}\int _ { 0 } ^ { t } \sigma_0 ( s , \omega )  \mathrm{dW}(s)\\
	Y ( t )  &= L_{\varphi,x} \left\{ X  ( 0 ) + (1- L_{\sigma,z}L_{\varphi,x} )^{-1}\int _ { 0 } ^ { t } \sigma_0 ( s , \omega )  \mathrm{dW}(s)\right\} , \\
	Z ( t )  &= L_{\varphi,x} (1- L_{\sigma,z}L_{\varphi,x} )^{-1} \sigma_0 ( t , \omega ),  \quad t \in [0,T].  
	\end{dcases} 
	\end{equation*} 
	Thus, the above $(X, Y, Z)$ solves the following FBSDE; 
	\begin{equation*}
	\begin{dcases}
	X  ( t )  &= X  ( 0 ) + \int _ { 0 } ^ { t } [ L_{\sigma,z}  Z ( s ) + \sigma_0 ( s , \omega ) ] \mathrm{dW}(s), \\
	Y  ( t )  &= L_{\varphi,x} X ( T ) (\omega) - \int _ { t } ^ { T } Z ( s )  \mathrm{dW}(s). 
	\end{dcases} 
	\end{equation*}
	Thus, this is well defined if and only if $ L_{\sigma,z}L_{\varphi,x}  \neq 1 $ for non-degenerate $\sigma_0$.  
	For degenerate case; $\mathcal{Z}_0 \neq \emptyset \Leftrightarrow \sigma_0 \equiv 0$, 
	 the existence and uniqueness hold even if  $ L_{\sigma,z}L_{\varphi,x}  = 1 $. 
\end{Rem}

\section{Appendix}\label{sec:Martingale estimation}
In this section, 
for a given $\sigma$, 
and for any $(t, x) \in [0,T] \times \mathbb{R}^{\dimX}$,  let $X^{t, x} = \left\{ X^{t, x} (r) \right\}_{r \in [t,T]}$ be a solution to the equation, 
	\begin{equation}\label{eq:drift-less forward SDE}
		X (r) = x + \int_{t}^{r} \sigma \left(s, X (s)\right) \mathrm{dW}(s), \quad t \leq r \leq T. 
	\end{equation}
\begin{Lem}[Key lemma]\label{lem: a key lemma: Lipschitz version}
Suppose that $\sigma$ satisfy \eqref{ass: smooth}. 
Let $X^{t, x} = \left\{ X^{t, x} (r) \right\}_{r \in [t,T]}$ be a solution to the equation \eqref{eq:drift-less forward SDE}. 
	Then, for all $0 \leq t \leq r$ and $x , h \in  \mathbb{R}^{\dimX}$, 
\begin{equation*}
	 \mathbb{E} \left[\left|  X^{t, x+ h  } (r) - X^{t, x} (r) \right|\right]
	\leq	\sqrt{\dimX} \left|  h\right|   . 
\end{equation*}
\end{Lem}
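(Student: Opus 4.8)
The plan is to study the difference process $D(r) := X^{t,x+h}(r) - X^{t,x}(r)$ directly. Since the forward equation \eqref{eq:drift-less forward SDE} carries no drift, $D$ is a vector-valued local martingale with $D(t)=h$, namely $D(r) = h + \int_t^r [\sigma(s,X^{t,x+h}(s)) - \sigma(s,X^{t,x}(s))]\,\mathrm{d}\mathrm{W}(s)$. Expanding the coefficient increment by the fundamental theorem of calculus, $\sigma(s,X^{t,x+h}(s)) - \sigma(s,X^{t,x}(s)) = \mathcal{C}(s)\,D(s)$ with the (schematic) contraction $\mathcal{C}(s) := \int_0^1 \nabla_x\sigma(s, X^{t,x}(s)+\theta D(s))\,\mathrm{d}\theta$, so that $D$ solves a \emph{linear} equation $\mathrm{d}D(r) = \mathcal{C}(r)D(r)\,\mathrm{d}\mathrm{W}(r)$ whose coefficient is bounded by $L_{\sigma,x}$ thanks to \eqref{ass: smooth}. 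The task is then to show that such a driftless linear recursion is non-expansive in $L^1$ up to the dimensional factor $\sqrt{\dimX}$, with a constant independent of $T$ and of $L_{\sigma,x}$.

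First I would settle the scalar case $\dimX=1$, where the estimate is exact. Here $\mathcal{C}(r)=a(r)$ is a bounded scalar process, $|a|\le L_{\sigma,x}$, and the linear SDE $\mathrm{d}D = a(r)D\,\mathrm{d}\mathrm{W}$ is solved by the Dol\'eans--Dade exponential $D(r) = h\,\mathcal{E}(r)$ with $\mathcal{E}(r) = \exp(\int_t^r a\,\mathrm{d}\mathrm{W} - \tfrac12\int_t^r a^2\,\mathrm{d}s)$. Since $a$ is bounded, $\mathcal{E}$ is a genuine positive martingale with $\mathbb{E}[\mathcal{E}(r)]=1$; as $\mathcal{E}>0$ the process $D$ keeps the sign of $h$, whence $|D(r)| = |h|\,\mathcal{E}(r)$ and $\mathbb{E}[|D(r)|] = |h|$ exactly. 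Equivalently, by Tanaka's formula $\mathrm{d}|D| = \operatorname{sgn}(D)\,\mathrm{d}D + \mathrm{d}L^0$, the martingale part has zero mean and the local time $L^0$ at the origin vanishes because $D$ never hits $0$, giving the same conclusion. This already yields the claim with constant $1=\sqrt{1}$.

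For the vector case I would try to reduce to the scalar estimate coordinate by coordinate. Each component $D_i$ is a scalar martingale with $D_i(t)=h_i$, and the dimensional factor is produced by the elementary norm comparison $|D(r)| \le \sum_{i=1}^{\dimX}|D_i(r)|$ followed by Cauchy--Schwarz $\sum_{i=1}^{\dimX} |h_i| \le \sqrt{\dimX}\,|h|$. Concretely, if one can establish $\mathbb{E}[|D_i(r)|]\le |h_i|$ for every $i$, then $\mathbb{E}[|D(r)|] \le \sum_i \mathbb{E}[|D_i(r)|] \le \sum_i|h_i| \le \sqrt{\dimX}\,|h|$, which is the assertion.

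The main obstacle is precisely this last step. Unlike the scalar case, the coordinate $D_i$ is driven by the \emph{whole} vector $D$ through the off-diagonal entries of $\mathcal{C}(r)$, so it does not satisfy a proportional equation $\mathrm{d}D_i = (\cdots)D_i\,\mathrm{d}\mathrm{W}$ and the stochastic-exponential (or Tanaka) argument is not available coordinate-wise. Controlling these coupling terms is where the estimate is delicate, and it is exactly the point at which one must check that no factor depending on $T$ or $L_{\sigma,x}$ enters; I expect the heart of the proof, and its main difficulty, to be the careful treatment of this multidimensional coupling, with the scalar identity above serving as the guiding model case.
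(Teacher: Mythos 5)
Your proposal is not a complete proof: it establishes the lemma only for $\dimX=1$ and explicitly leaves the multidimensional case open. The parts you do prove coincide with the paper's mechanism — the difference $D(r)=X^{t,x+h}(r)-X^{t,x}(r)$ solves a driftless linear SDE, in the scalar case $D=h\,\mathcal{E}$ for a positive Dol\'eans--Dade exponential with $\mathbb{E}[\mathcal{E}(r)]=1$, giving the exact identity $\mathbb{E}[|D(r)|]=|h|$ — and your final bookkeeping $\mathbb{E}[|D(r)|]\le\sum_i\mathbb{E}[|D_i(r)|]\le\sum_i|h_i|\le\sqrt{\dimX}\,|h|$ is literally the paper's closing step. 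But the coordinate-wise bound $\mathbb{E}[|D_i(r)|]\le|h_i|$ on which your reduction rests is precisely what you do not supply, and as you yourself observe, it cannot be obtained coordinate-wise: the off-diagonal entries of $\mathcal{C}(r)$ feed the whole vector $D$ into each component, so $D_i$ is not a proportional linear SDE and the stochastic-exponential (or Tanaka) argument is unavailable, while any Gronwall-type substitute would introduce constants depending on $T$ and $L_{\sigma,x}$, destroying the point of the lemma. For $\dimX\ge2$ the proposal therefore stops exactly at the lemma's substantive content.

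For comparison, the paper closes this gap by perturbing one coordinate at a time. For $h=\delta e_{(j)}$ it forms the augmented triple $\bigl(X^{t,x+\delta e_{(j)}},\,X^{t,x},\,M\bigr)$ with $M=\epsilon^{-1}\bigl(X^{t,x+\delta e_{(j)}}-X^{t,x}\bigr)$, notes that this triple solves an SDE with Lipschitz coefficients, and invokes pathwise uniqueness to identify $M$ with $\epsilon^{-1}\delta\,\mathcal{E}(X,j)\,e_{(j)}$, where $\mathcal{E}(X,j)$ is a scalar stochastic exponential built from the averaged derivative $\Sigma_{\cdot,j}$; since $\nabla_x\sigma$ is bounded, $\mathcal{E}(X,j)$ is a true positive martingale with mean one, yielding the exact identity $\mathbb{E}\bigl[|X^{t,x+\delta e_{(j)}}(r)-X^{t,x}(r)|\bigr]=|\delta|$. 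For general $h$ the paper then represents the difference as $\bigl(h_1\mathcal{E}(X,1),\dots,h_{\dimX}\mathcal{E}(X,\dimX)\bigr)$ and finishes with exactly your triangle/Cauchy--Schwarz step; a telescoping sum over the intermediate points $x,\,x+h_1e_{(1)},\,x+h_1e_{(1)}+h_2e_{(2)},\dots$ would achieve the same bound $\sum_j|h_j|$ from the coordinate-direction identity alone. So the missing idea in your proposal is this identification-by-uniqueness — the assertion that the directional difference stays collinear with $e_{(j)}$ — together with the coordinate decomposition. It is fair to add that your instinct about the delicacy is well placed: the paper's identification implicitly requires the linearized flow not to mix coordinates (the off-diagonal part of $\nabla_x\sigma$ is exactly what would break collinearity of $M$ with $e_{(j)}$), so the coupling you flag as the main obstacle is also the most delicate step of the paper's own argument, which it treats only through this terse pathwise-uniqueness claim.
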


\begin{proof}
	For any fixed $(t, x) \in [0,T] \times \mathbb{R}^{\dimX}$, we write 
	for $j=1, 2, \dots, \dimX$, 
	$e_{(j)} = (0, \dots, 0, \overbrace{1}^{j}, 0, \dots, 0)$ and 	we set for any fixed $\epsilon \in (-1,1) \setminus \{0\}$, $\delta \geq 0$ and  $j=1, 2, \dots, \dimX$, 
	\begin{equation*}
		  M^{\epsilon^{-1} \delta e_{(j)} } (r) \equiv \epsilon^{-1} \left(  X^{t, x+  \delta e_{(j)} } (r) - X^{t, x} (r) \right), \quad r \in [t,T]. 
	\end{equation*}
	Thus, we have 
	\begin{equation*}
		\begin{split}
		&M^{\epsilon^{-1} \delta e_{(j)} } (r) = \epsilon^{-1}\delta e_{(j)} 
		\\&+ \sum_{k=1}^{\dimB}\int_{t}^{r} 
		\Sigma_{k, j}\left(s,  X^{t, x+  \delta e_{(j)} } (s),  X^{t, x} (s) \right) 
		   M^{\epsilon^{-1} \delta e_{(j)} } \mathrm{d}W_{k} (s) 
		\end{split}
	\end{equation*}
	where $\Sigma_{k, j}$ is a function such that for $k=1, \dots, \dimB$ and $j=1, \dots, \dimX$
	\begin{equation*}
		\Sigma_{k, j} (s, a, b) = \int_{0}^{1} (\nabla_x \sigma)_k \left(s, (1-\theta) a  + \theta b \right) \mathrm{d} \theta,  \quad (s, a, b) \in [0,T] \times \mathbb{R}^{2\dimX} , 
	\end{equation*}	
	where $(\nabla_x \sigma)_k$ stands for the $k$-component of the space derivative $\sigma$. 
	We note that 
	\begin{equation*}
		\left\{ \left( X^{t, x+ \delta e_{(j)} } (r) ,  X^{t, x} (r), M^{\epsilon^{-1} \delta e_{(j)} } (r) \right) \right\}_{r \in [t,T]}
	\end{equation*}
	is a solution to the SDE with Lipschitz continuous coefficient. 
	In particular, when $x=\delta=0$ it has the trivial solution, $0 \in \mathbb{R}^{3\dimX}$. 
	It follows from the pathwise uniqueness that 
	the $\dimX$-dimensional process $M^{\epsilon^{-1} \delta e_{(j)} }$ satisfies  
	\begin{equation*}
	M^{\epsilon^{-1} \delta e_{(j)} }  = (0, \dots, 0,\epsilon^{-1} \delta \mathcal{E}(X, j), 0, \dots, 0),
	\end{equation*}
	where we denote 
	\begin{equation*}	
	\mathcal{E}(X, j) =
		  \exp \left[ -\frac{1}{2}  \int_{t}^{r} \langle \Sigma_{\cdot, j}(s),  \Sigma_{\cdot, j}(s) \rangle_{\mathbb{R}^{\dimB}}\mathrm{d}s+ \sum_{k=1}^{\dimB}\int_{t}^{r}  \Sigma_{k, j}(s) \mathrm{d}W_{k} (s)\right]. 
	\end{equation*}	
	In short, we conclude that 
	for any $x \in \mathbb{R}^{\dimX}$ and $\delta \geq 0$, it holds 
	\begin{equation*}
		 \mathbb{E}[|  X^{t, x+  \delta e_{(j)} } (r) - X^{t, x} (r)  |]
		 = \mathbb{E}[|  \delta \mathcal{E}(X, j)   |]
		= \left|  \delta \right|, \quad j=1,2, \dots, \dimX. 	
	\end{equation*}
	Moreover, for any $h=(h_{1}, h_{2}, \dots, h_{\dimX}) \in \mathbb{R}^{\dimX}$, 
	it holds that 
	\begin{equation*}
		X^{x+h}(r) -X^{x}(r) = 
		(h_{1} \mathcal{E}(X, 1), h_{2} \mathcal{E}(X, 2), \dots,  h_{\dimX} \mathcal{E}(X, \dimX)).
	\end{equation*}
	It leads that for any $(t, x) \in [0,T] \times \mathbb{R}^{\dimX}$, we have
	\begin{equation*}
		\begin{split}
			&\mathbb{E} \left[ \left|  X^{t, x+h}(r) -X^{t,x}(r) \right| \right] 
			=  \mathbb{E} \left[ \left\{\sum_{j=1}^{\dimX}|h_{j}|^2 |\mathcal{E}(X, j) (r)|^2 \right\}^{\frac{1}{2}} \right] 
		\\&\leq \mathbb{E} \left[ \sum_{j=1}^{\dimX} | h_j | | \mathcal{E}(X, j)(r)| \right]
		= \sum_{j=1}^{\dimX} | h_j | \mathbb{E} [ |\mathcal{E}(X, j)(r)|]
		=\sum_{j=1}^{\dimX} | h_j |.
		\end{split}
	\end{equation*}
	The desired result is followed from the inequality, 
	\begin{equation*}
	\begin{split}
		\sum_{j=1}^{\dimX} | h_j |
		\leq \sqrt{\dimX} \left( \sum_{j=1}^{\dimX} | h_j |^2 \right)^{1/2} .
	\end{split}
	\end{equation*}

\end{proof}

\begin{Rem}
	This estimate holds for the $L^1$ but not $L^2$. 
	In fact, we consider an exponential martingale; 
	\begin{equation*}
		X(r) = x + \int_{t}^{r} L_{\sigma, x}\, X(s)\mathrm{dW}(s)
		= x \exp \left( {L_{\sigma, x}} W(r) -\frac{1}{2}{L_{\sigma, x}^2} r \right). 
	\end{equation*} 
	Thus we have 
	\begin{equation*}
		\mathbb{E}[|X^{t, x+h}(r) - X^{t, x}(r)|^p] =h^p \exp \left( \frac{1}{2}\left(p^2 - 1\right){L_{\sigma, x}^2} r \right), \quad p>0. 
	\end{equation*}
	From the observation, the above inequality is interesting itself. 
\end{Rem}

\begin{Rem}[Calder\'on Zygmund lemma]
A key of the estimation is that of heat kernel, thus, which can be proven by a fundamental analytical approach known as Calder\'on Zygmund lemma. 
Moreover, the transition function of SDE, has a uniformly equicontinuous for the time and space if the coefficients satisfies a {\it bounded} condition see the book of \cite[Theorem 7.2.4]{Stroock1997}.
For one dimensional unbounded setting, a simple proof also given by \cite{Tsuchiya2021v2}. 
\end{Rem}

\subsection{Dominated property and decoupling FBSDEs}
We note a relation between the decoupling expression and the non-linear Feynman-Kac formula. 
According to the result was given \cite{PardouxPeng1992}, the decoupling FBSDEs; 
\begin{equation*}	
	\begin{dcases}
		X(r) = X (t) + \int_{t}^{r} b \left(s, X(s) \right) \mathrm{d}s+ \int_{t}^{r} \sigma \left(s, X(s) \right) \mathrm{dW}(s)
		\\ Y(r) = \varphi \left( X(T) \right) + \int_{r}^{T} f \left(s, X(s), Y(s), Z(s) \right) \mathrm{d}s - \int_{r}^{T} Z (s) \mathrm{dW}(s), \quad r \in [t,T]. 
	\end{dcases}
\end{equation*}
gives a expression such that there exists a pair of functions, $(u, v)$ satisfies 
\begin{equation*}
		Y(s) = u(s, X(s)), \quad Z(s) = \nabla_x u (s, X(s)) \cdot \sigma (s, X(s) ), \quad s  \in [0,T]. 
\end{equation*}
Therefore, we obtain 
\begin{Lem}\label{lem: an expression via FK and BSDE uniqueness}
Suppose that \eqref{ass: smooth} holds. 
Then, there exists a series $(u_n, v_n)$ satisfying 
the equation \eqref{decoupling FBSDE approximation via u and v} and 
it is uniquely determined for all $n \in \mathbb{N}$. 
\end{Lem}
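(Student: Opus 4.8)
The plan is to argue by induction on $n$, at each step reducing the coupled system to a \emph{decoupled} forward-backward SDE that is covered by classical theory. The key observation is that in \eqref{decoupling FBSDE approximation via u and v} the diffusion coefficient of $X_n^{t,x}$ is frozen at the previous iterate: writing $\sigma^{(n)}(s,x) := \sigma\bigl(s, x, u_{n-1}(s,x), v_{n-1}(s,x)\bigr)$, the forward equation depends on $(s,x)$ only, so $X_n$ genuinely decouples from $(Y_n, Z_n)$. The induction hypothesis I carry is that $(u_{n-1}, v_{n-1})$ is $C^\infty$ in the spatial variable with bounded derivatives. The base case holds because $u_0 = \varphi$ and $v_0 = 0$ enjoy this property by \eqref{ass: smooth}.

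For the inductive step I first note that, under the hypothesis, $\sigma^{(n)}(s,\cdot)$ is the composition of the $C^\infty$, globally Lipschitz map $\sigma$ with the $C^\infty$ maps $u_{n-1}, v_{n-1}$ having bounded derivatives; hence $\sigma^{(n)}$ is Lipschitz in $x$ uniformly in $s$. By the classical theory of SDEs with Lipschitz coefficients (e.g.\ \cite{IkedaWatanabe}) the forward equation for $X_n^{t,x}$ admits a unique strong solution, jointly continuous in $(t,x)$ together with its spatial derivatives. With $X_n$ in hand, the backward equation is a standard BSDE with square-integrable terminal datum $\varphi(X_n^{t,x}(T))$ and Lipschitz generator $f(s, X_n^{t,x}(s),\cdot,\cdot)$, so by the standard theory of BSDEs with Lipschitz data it has a unique solution $(Y_n, Z_n) \in \mathscr{S}^2 \times \mathscr{H}^2$.

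Next I invoke the nonlinear Feynman--Kac representation of \cite{PardouxPeng1992}: there is a deterministic function $u_n$ solving, in the classical sense, the semilinear parabolic PDE
\begin{equation*}
	\partial_t u_n + \tfrac12 \operatorname{trace}\!\left( \sigma^{(n)} (\sigma^{(n)})^{*}\, \nabla_x^2 u_n \right) + f\!\left(t, x, u_n, \nabla_x u_n \, \sigma^{(n)} \right) = 0, \qquad u_n(T, \cdot) = \varphi,
\end{equation*}
and satisfying $Y_n^{t,x}(r) = u_n(r, X_n^{t,x}(r))$ together with $Z_n^{t,x}(r) = \nabla_x u_n(r, X_n^{t,x}(r))\, \sigma^{(n)}(r, X_n^{t,x}(r)) =: v_n(r, X_n^{t,x}(r))$. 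Because all data in \eqref{ass: smooth} are infinitely differentiable with bounded derivatives and $\sigma^{(n)}$ inherits this regularity from the induction hypothesis, parabolic regularity yields that $u_n$ is again $C^\infty$ in $x$ with bounded derivatives; since $v_n = \nabla_x u_n\, \sigma^{(n)}$, the same holds for $v_n$, which closes the induction.

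Finally, uniqueness is inherited at every stage: $X_n$ is the unique strong solution of its (now decoupled) SDE, $(Y_n, Z_n)$ is the unique BSDE solution, and $u_n$ is pinned down as the deterministic quantity $u_n(t,x) = Y_n^{t,x}(t)$, whence $v_n = \nabla_x u_n\, \sigma^{(n)}$ is determined as well; thus $(u_n, v_n)$ is unique for every $n$. I expect the one genuinely delicate point to be the propagation of spatial regularity, i.e.\ guaranteeing that the new $u_n$ is smooth with bounded derivatives so that the construction can be iterated. This rests on interior Schauder estimates for the semilinear PDE (equivalently, differentiability of the BSDE flow in its initial datum), for which the uniform smoothness in \eqref{ass: smooth} is exactly what is needed; note that only per-step bounds, possibly depending on $n$, are required here, the $n$-uniform estimates being the content of the subsequent lemmas.
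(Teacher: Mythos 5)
Your proof is correct and follows essentially the same route as the paper's: induction on $n$, observing that freezing the coefficients at $(u_{n-1},v_{n-1})$ decouples the system, then classical well-posedness of the Lipschitz SDE and of the BSDE plus the Pardoux--Peng Feynman--Kac representation yield $(u_n,v_n)$ with $v_n=\nabla_x u_n\,\sigma^{(n)}$, and regularity is propagated so the construction iterates. The only caveat is your claim that $v_n$ inherits \emph{bounded} derivatives: since $\sigma$ is merely Lipschitz, hence of linear growth, $\nabla_x v_n$ contains the term $\left(\nabla_x^2 u_n\right)\sigma^{(n)}$ and is in general only of linear growth; but local Lipschitz continuity together with linear growth of the next frozen coefficient is all the induction needs for strong well-posedness, which is likewise all the paper's own proof uses.
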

\begin{proof} 
Firstly, 
for $(t, x) \in [0,T]\times \mathbb{R}^{\dimX}$, we denote $X_{0}^{t, x} (r) \equiv x$ for all $t \leq r \leq T$. 
Thus, the \eqref{ass: smooth} implies that $(Y_0, Z_0) \in \mathscr{S}^2 \times \mathscr{H}^2$ be a unique solution of BSDE such that 
\begin{equation*}
	Y_{0} (r) = \varphi \left( X_{0} (T) \right) + \int_{r}^{T} f \left( s, X_{0}(s), Y_{0}(s), Z_{0}(s) \right) \mathrm{d}s-   \int_{r}^{T}  Z_{0}(s) \mathrm{d}s, \quad  t \leq r \leq T. 
\end{equation*}
Moreover, we have for all $(t, x) \in [0,T] \times \mathbb{R}^{\dimX}$, 
\begin{equation*}
Y_0 (r) =  \varphi ( x ) , \quad Z_0 (r)=0, \quad t \leq r \leq T . 
\end{equation*}
Denoting $ u_{0}(t, x)= \varphi ( x )$ and $v_{0}(t, x)=0$ for all $(t, x) \in [0,T] \times \mathbb{R}^{\dimX}$, 
it follows from \eqref{ass: smooth} again that there exists a unique strong solution to the following SDE, 
\begin{equation*}
		X_{1} (r) = x + \int_{t}^{r} \sigma \left(s, u_0 (s, X_{1} (s)), v_0 (s, X_{1} (s) ) \right)  \mathrm{dW}(s), \quad t \leq r \leq T. 
\end{equation*}

For $k \in \mathbb{N}$, suppose that there exists a series of a pair of smooth functions $(u_{k-1}, v_{k-1})$ such that 
\begin{equation*}
	 v_{k-1} (t, x) = \left(\nabla_x u_{k-1}\right)(t, x) \cdot \sigma (t, x, u_{k-1}(t, x), v_{k-1}(t, x)), \quad (t, x) \in [0,T]\times \mathbb{R}^{\dimX}. 
\end{equation*}
It induces the following decoupling FBSDE's solution $\Theta_{k}^{t, x} $; 
\begin{equation*}
	\begin{dcases}
	X_{n(k)} (r) &= x + \int_{t}^{r} \sigma \left( s, X_{n(k)}(s), u_{k-1} (s, X^{t, x}_{k} (s) ), v_{k-1} (s, X^{t, x}_{k} (s) ) \right) \mathrm{dW}(s)
	\\ Y_{k} (r) &= \varphi \left( X_{n(k)} (T) \right) + \int_{r}^{T} f \left( s, \Theta_{k}^{t, x}(s) \right) \mathrm{d}s
	 -  \int_{r}^{T}  Z_{k}(s) \mathrm{dW}(s), \quad (t, x) \in [0,T] \times \mathbb{R}^{\dimX}. 
	\end{dcases}
\end{equation*}
Thanks to \cite{PardouxPeng1992}, there exists a semi-linear parabolic PDE solution $u_{k}$ such that it satisfies for all $t \leq r \leq T$, 
\begin{equation*}
	Y_{k}(r) = u_{k} (r, X_{n} (r) ), \ 
	Z_{k}(r) = \left( \nabla_x u_{k} \right)(r, X_{n(k)} (r) ) \cdot \sigma \left( \nabla_x u_{k} \right) (r, \Theta_{k}^{t, x}(r) ). 
\end{equation*}
In particular, putting $v_{n}(t, x) \triangleq Z^{t, x}_{n} (t)$ we have 
\begin{equation*}
	v_{n}(t, x) = \left( \nabla_x u_{n} \right)(t, x) \cdot  \sigma \left( t, x , u_{n}(t, x), v_{n}(t, x) \right), \quad (t, x) \in [0,T] \times \mathbb{R}^{\dimX}. 
\end{equation*}
The smooth condition of the diffusion coefficient $\sigma (s, u_{k} (s, x), v_{k} (s, x))$ implies that there exists a strong unique solution in $\mathscr{S}^2$ such that 
\begin{equation*}
	X_{k+1} (r) = x + \int_{t}^{r} \sigma \left(s, X_{k+1} (s), u_{k}(s, X_{k+1} (s) ), v_{k}(s, X_{k+1} (s) ) \right)\mathrm{dW}(s). 
\end{equation*}
Therefore, we obtain the desired result. 
	\end{proof}
	
	\begin{Rem}
	The series of functions $\{ u_n \}$ and $\{ v_n \}$ is Lipschitz continuous for all $(t, n) \in [0,T] \times \mathbb{N}$: 
	\begin{equation*}
	\begin{split}
		&\left| u_{n}(t, x+ h) - u_{n}(t, x) \right|	 \leq L_{u_n, x} |h|, 
		\\ &\left| v_{n}(t, x+ h) - v_{n}(t, x) \right|	 \leq L_{v_n, x} |h|, \quad (x, h) \in \mathbb{R}^{\dimX} \times \mathbb{R}^{\dimX}.  
	\end{split}
	\end{equation*}
	It stands for the construction of the scheme is easy but the problem is to select the convergence sequence if it exists. 
	\end{Rem}

	The following dominated property is convenient. 
\begin{Lem}[$\overline{Y}$ dominated by $\overline{X}$]\label{lem: Y dominated by X}
	For any $X_i \in \mathscr{S}^2 , \ (i=1,2)$, 
	$(Y_i, Z_i)$ be a solution of BSDEs, 
	\begin{equation*}
	\begin{split}
		&Y_i (r)=\varphi (X_i (T))  + \int_{r}^{T} f \left(  s, X_i (s), Y_i (s), Z_i (s) \right) \mathrm{d}s-  \int_{r}^{T} Z_i (s) \mathrm{dW}(s), \quad r \in [t,T].
	\end{split}
	\end{equation*}
	Denote and write 
	\begin{equation*}
		(\overline{X}, \overline{Y}, \overline{Z}) \triangleq (X_1 - X_2, Y_1 - Y_2, Z_1 - Z_2 ). 
	\end{equation*}
	For any $\epsilon \in (0,1)$ we denote 
	$L = (1+ L_{f, y}^2 +\epsilon^{-1} ) + (1-\epsilon ) $. 
	Then, it holds that 
	\begin{equation*}
	\begin{split}
		&	\mathbb{E} \left[ \left| \overline{Y} (t) \right|^2 e^{-Lt} \right] 
			+(1-\epsilon )\int_t^T \mathbb{E} \left[\left(  \left| \overline{Y} (s) \right|^2+  \left| \overline{Z} (s) \right|^2 \right)  e^{-Ls}\right] \mathrm{d}s
		 \\&\leq   \mathbb{E} \left[ \left| \delta_{X} \varphi (T)\right|^2 e^{-LT}\right]  +  \int_{t}^{T}    \mathbb{E} \left[ \left|\delta_{X} f (s) \right|^2 e^{-Ls} \right]  \mathrm{d}s. 
	\end{split}
	\end{equation*}
	where we define 
	$\delta_{X}  f (s) = f (s, X_1(s), Y_1 (s), Z_1 (s) ) - f (s, X_2 (s), Y_1 (s), Z_1 (s) )$ and 
	$ \delta_{X} \varphi (T)\triangleq \varphi \left( X_1 (T) \right)-\varphi \left( X_2 (T) \right)$.	
\end{Lem}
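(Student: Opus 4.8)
The plan is to run the classical weighted $L^2$ (energy) estimate for the difference BSDE, keeping the dependence on the forward difference isolated. First I would subtract the two backward equations to obtain that $(\overline{Y}, \overline{Z})$ solves the BSDE with terminal value $\delta_X \varphi(T)$ and driver $g(s) := f(s, X_1(s), Y_1(s), Z_1(s)) - f(s, X_2(s), Y_2(s), Z_2(s))$, namely
\begin{equation*}
	\overline{Y}(r) = \delta_X \varphi(T) + \int_r^T g(s)\,\mathrm{d}s - \int_r^T \overline{Z}(s)\,\mathrm{d}W(s), \quad r \in [t,T].
\end{equation*}
Then I would apply It\^o's formula to the exponentially weighted square $e^{-Ls}|\overline{Y}(s)|^2$, the weight being tuned to absorb the linear terms below. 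The decisive gain is the quadratic-variation contribution $e^{-Ls}|\overline{Z}(s)|^2\,\mathrm{d}s$: this is the only mechanism by which $\overline{Z}$ enters the estimate at all, since it is absent from the terminal data.

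The second step is to telescope the driver difference so that the $X$-dependence is separated from the $Y$- and $Z$-dependence. I would write
\begin{equation*}
	g(s) = \delta_X f(s) + \big[ f(s, X_2, Y_1, Z_1) - f(s, X_2, Y_2, Z_1) \big] + \big[ f(s, X_2, Y_2, Z_1) - f(s, X_2, Y_2, Z_2) \big],
\end{equation*}
where the first bracket is exactly the exogenous term $\delta_X f(s)$ depending only on the forward difference, while the second and third brackets are dominated by $L_{f,y}|\overline{Y}(s)|$ and $L_{f,z}|\overline{Z}(s)|$ respectively, using the Lipschitz continuity of $f$ granted by \eqref{ass: smooth}. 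After taking expectations, the cross term $\mathbb{E}\int 2\,\overline{Y}\cdot g$ is handled by Young's inequality: the $Z$-increment contributes $2\,\overline{Y}(s)\cdot(Z\text{-increment}) \leq \epsilon^{-1}|\overline{Y}(s)|^2 + \epsilon|\overline{Z}(s)|^2$, whose $\epsilon|\overline{Z}|^2$ part is absorbed into the quadratic-variation $|\overline{Z}|^2$ and leaves the factor $(1-\epsilon)$; the $\delta_X f$ and $Y$-increment terms produce the $|\overline{Y}|^2$ coefficients of size $1$ and $L_{f,y}^2$. These $|\overline{Y}|^2$ contributions, together with $\epsilon^{-1}$, are precisely what the weight exponent $L = (1 + L_{f,y}^2 + \epsilon^{-1}) + (1-\epsilon)$ is built to cancel, so that the surviving coefficient of $|\overline{Y}|^2$ on the left is again $(1-\epsilon)$.

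The one genuinely technical point is the stochastic integral $\int_t^T e^{-Ls}\,\overline{Y}(s)\cdot\overline{Z}(s)\,\mathrm{d}W(s)$, which a priori is only a local martingale. I would introduce a localizing sequence $\tau_N \uparrow T$, take expectations up to $\tau_N$ where this integral has zero mean, and then pass to the limit using $\overline{Y} \in \mathscr{S}^2$ and $\overline{Z} \in \mathscr{H}^2$ together with dominated and monotone convergence to recover the bound on all of $[t,T]$. I expect the main obstacle to be bookkeeping rather than anything conceptual: one must arrange the Young parameter $\epsilon$ so that every quadratic term in $\overline{Y}$ and $\overline{Z}$ is absorbed into exactly the stated constant $L$ and the common factor $(1-\epsilon)$, leaving only $|\delta_X \varphi(T)|^2$ and $|\delta_X f(s)|^2$ on the right. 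This is the standard a priori BSDE estimate, specialized to track the forward perturbation separately.
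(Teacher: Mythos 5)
Your proposal is correct and takes essentially the same route as the paper's own proof: It\^o's formula applied to $e^{-Ls}\left|\overline{Y}(s)\right|^2$, telescoping the driver difference into $\delta_X f(s)$ plus increments bounded by $L_{f,y}|\overline{Y}(s)|$ and $L_{f,z}|\overline{Z}(s)|$, Young's inequality with parameter $\epsilon$, and the choice of $L$ to absorb the $|\overline{Y}|^2$ contributions while leaving the factor $(1-\epsilon)$. The only (immaterial) difference is that you localize the stochastic integral with stopping times, whereas the paper notes directly that $\overline{Y}\in\mathscr{S}^2$ and $\overline{Z}\in\mathscr{H}^2$ already make $\int_t^{\cdot} e^{-Ls}\langle \overline{Y}(s), \overline{Z}(s)\,\mathrm{d}W(s)\rangle$ a true martingale vanishing at $T$.
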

	\begin{proof}
	Applying the It\^o formula to $\left| \overline{Y} (t) \right|^2 e^{- L t  }$ for some $L>0$, we have 
	\begin{equation*}
	\begin{split}
			& \left| \overline{Y} (t) \right|^2 e^{-Lt} 
			+ L \int_t^T \left| \overline{Y} (s) \right|^2 e^{-Ls} \mathrm{d}s
			+  \int_t^T \left| \overline{Z} (s) \right|^2 e^{-Ls} \mathrm{d}s
		\\ &=   \left| \delta_{X} \varphi (T) \right|^2 e^{-LT} +  2 \int_{t}^{T}  \langle \overline{Y}(s), \overline{f} (s) \rangle e^{-Ls} \mathrm{d}s
		-   2 \int_{t}^{T} e^{-Ls} \langle \overline{Y}(s), \overline{Z}(s) \mathrm{d}W(s) \rangle
		.
	\end{split}
	\end{equation*}
	where we define $\overline{f} (s) = f (s, X_1(s), Y_1(s), Z_1 (s) ) - f (s, X_2 (s), Y_2 (s), Z_2 (s) )$. 
	For arbitrary $\epsilon>0$, we have $2 ab \leq {\epsilon a}^2 + {\epsilon^{-1} b^2}$ for all $a, b \geq 0$ holds, we have 
	\begin{equation*}
	\begin{split}
		&2\left|  \langle \overline{Y}(s), \overline{f} (s) \rangle \right| 
		\\ &\leq  2 \left|  \overline{Y}(s) \right| \left( 
		|\delta_X f (s)|
		 + L_{f, y}|\overline{Y}(s)| + L_{f, z} |\overline{Z}(s)|  \right) 
		\\ &\leq |\delta_{X} f (s)|^2
		+ (1+ L_{f, y}^2 +\epsilon^{-1} )|\overline{Y}(s)| ^2 + 
		\epsilon L_{f, z}^2 |\overline{Z}(s)| ^2 .
	\end{split}
	\end{equation*}
	Since $\overline{Y}, \overline{Z}$ is squared integrable, 
	$ \left\{ \int_{r}^{T}  \langle \overline{Y}(s), \overline{Z}(s) \mathrm{d}W(s) \rangle \right\}_{r \in [t,T] } $ is martingale vanishing at $T$. 
	Therefore, we obtain 
	\begin{equation*}
	\begin{split}
			&\mathbb{E} \left[ \left| \overline{Y} (t) \right|^2 e^{-Lt}\right] 
			+ L \int_t^T \mathbb{E} \left[ \left| \overline{Y} (s) \right|^2 e^{-Ls}\right] \mathrm{d}s
			+  \int_t^T \mathbb{E} \left[ \left| \overline{Z} (s) \right|^2 e^{-Ls}\right] \mathrm{d}s
		\\ &\leq   \mathbb{E} \left[ \left| \delta_{X} \varphi  \right|^2 e^{-LT}\right]  
		\\ &+   \int_{t}^{T}    \mathbb{E} \left[  \left|\delta_{X} f (s) \right|^2 + (1+ L_{f, y}^2 +\epsilon^{-1} ) L_{f, y}^2  \left| \overline{Y} (s) \right|^2 
		 + \epsilon L_{f, z}^2 |\overline{Z}(s)| ^2  \right] e^{-Ls}  \mathrm{d}s.
	\end{split}
	\end{equation*}
	where we write $ \delta_{X} \varphi (T)\triangleq \varphi \left( X_1 (T) \right)-\varphi \left( X_2 (T) \right)$. 
	Now, putting $L = (1+ L_{f, y}^2 +\epsilon^{-1} ) + (1-\epsilon ) $, we obtain 
	\begin{equation*}
	\begin{split}
		&	\mathbb{E} \left[ \left| \overline{Y} (t) \right|^2 e^{-Lt} \right] 
			+(1-\epsilon )\int_t^T \mathbb{E} \left[\left(  \left| \overline{Y} (s) \right|^2+  \left| \overline{Z} (s) \right|^2 \right)  e^{-Ls}\right] \mathrm{d}s
		 \\&\leq   \mathbb{E} \left[ \left| \delta_{X} \varphi (T)\right|^2 e^{-LT}\right]  +  \int_{t}^{T}    \mathbb{E} \left[ \left|\delta_{X} f (s) \right|^2 e^{-Ls} \right]  \mathrm{d}s. 
	\end{split}
	\end{equation*}

	\end{proof}

\subsection*{Acknowledgements}
The authors thank Dai Taguchi for careful reading of the manuscript and 
I would like to thank Yushi Hamaguchi for useful discussions. 
Finally, I would like to thank reviewers for many great comments 
to modify and improve the paper. 


\bibliographystyle{abbrv}

\newpage 

\thispagestyle{empty}

\end{document}